\journal{Discrete Applied Mathematics}
\newtheorem{theorem}{Theorem}
\newtheorem{lemma}[theorem]{Lemma}
\newtheorem{proposition}[theorem]{Proposition}
\def\NEarrow#1{#1{\nearrow}}
\def\SEarrow#1{#1{\searrow}}
\def\NWarrow#1{#1{\nwarrow}}
\def\SWarrow#1{#1{\swarrow}}
\def\Westarrow#1{#1{\leftarrow}}
\def\Eastarrow#1{#1{\rightarrow}}
\def\Northarrow#1{#1{\uparrow}}
\def\Southarrow#1{#1{\downarrow}}
\def\NEArrow#1#2{#1{\nearrow}_{#2}}
\def\SEArrow#1#2{#1{\searrow}_{#2}}
\def\NWArrow#1#2{#1{\nwarrow}_{#2}}
\def\SWArrow#1#2{#1{\swarrow}_{#2}}
\def\WestArrow#1#2{#1{\leftarrow}_{#2}}
\def\EastArrow#1#2{#1{\rightarrow}_{#2}}
\def\NorthArrow#1#2{#1{\uparrow}_{#2}}
\def\SouthArrow#1#2{#1{\downarrow}_{#2}}
\def\North{{\rm North}}
\def\South{{\rm South}}
\def\East{{\rm East}}
\def\West{{\rm West}}
\begin{document}

\begin{frontmatter}


\title{The zero blocking numbers of grid graphs}
\cortext[corr]{Corresponding author}
\author{Hau-Yi Lin\fnref{nycu}}
\ead{zaq1bgt5cde3mju7@gmail.com}
\author{Wu-Hsiung Lin\corref{corr}\fnref{nycu}}
\ead{wuhsiunglin@nctu.edu.tw}
\author{Gerard Jennhwa Chang\fnref{ntu}}
\ead{gjchang@math.ntu.edu.tw}
\address[nycu]{Department of Applied Mathematics, National Yang Ming Chiao Tung University, Hsinchu 30010, Taiwan}
\address[ntu]{Department of Mathematics, National Taiwan University, Taipei 10617, Taiwan}






\begin{abstract}
In a zero forcing process,
vertices of a graph are colored black and white initially,
and if there exists a black vertex adjacent to exactly one white vertex,
then the white vertex is forced to be black.
A zero blocking set is an initial set of white vertices
in a zero forcing process such that ultimately there exists a white vertex.
The zero blocking number is the minimum size of a zero blocking set.
This paper gives the exact value of the zero blocking number of grid graphs.
\end{abstract}



\begin{keyword}
Zero forcing process \sep Zero blocking set \sep Zero blocking number \sep Grid graph

\MSC 05C69 \sep 05C85 \sep 68R10

\end{keyword}

\end{frontmatter}




\section{Introduction}\label{sec1}

In a {\em zero forcing process} on a graph $G$,
the vertices of $G$ are initially partitioned into two subsets ${\cal B}$ and ${\cal W}$,
and colored {\em black} and {\em white}, respectively.
If there exists a black vertex adjacent to exactly one white vertex,
then the white vertex is {\em forced} to be black.
If ${\cal B}$ ultimately expands to include $V(G)$, the vertex set of $G$,
then we call ${\cal B}$ a {\em zero forcing set};
otherwise we call ${\cal B}$ a {\em failed zero forcing set}
and ${\cal W}$ a {\em zero blocking set}.
The {\em zero forcing number} $Z(G)$ of $G$ is the minimum size of a zero forcing set.
The {\em failed zero forcing number} $F(G)$ of $G$ is the maximum size of a failed zero forcing set.
The {\em zero blocking number} $B(G)$ of $G$ is the minimum size of a zero blocking set.
As $F(G)+B(G)=|V(G)|$ for every graph $G$, studying $F(G)$ and $B(G)$ are equivalent.
Notice that if a zero blocking set of size $B(G)$,
then there exists no black vertex adjacent to exactly one white vertex.

The zero forcing process was introduced in \cite{2008AIM}
to study minimum rank problems in linear algebra, and independently in \cite{2007bg}
to explore quantum systems memory transfers in quantum physics.
The computation of the zero forcing number of a graph is NP-hard \cite{2008a}.
Considerable effort has been made to find exact values and bounds of
this number for specific classes of graphs,
and to investigate a variety of related concepts arising in zero forcing processes.

The concept of failed zero forcing number was first introduced in \cite{2015fjs}.
The computation of the failed zero forcing number is NP-hard \cite{2017s}.
Results for exact values and bounds of this number were also established in
{\cite{2024agm,2020apnaA,2020apnaB,2016ajps,2020bccknt,2024c,2021grtn,
2024grtn,2020ksv,2023ktn,2025LLc,2023su}.
Our main concern is the upper bounds for the zero blocking numbers of the
grid graph given by Beaudouin-Lafon et al.~\cite{2020bccknt}.
In particular, we prove that their bound is in fact the exact value.

For integers $m,n\ge 1$, the {\em grid graph} $G_{m,n}=P_m\Box P_n$
is the graph with vertex set $\{(i,j):1\le i\le n,\, 1\le j\le m\}$,
where $(x,y)$ is adjacent to $(x',y')$ if $|x-x'|+|y-y'|=1$.
The {\em $x$-th column} is the set $\{(x,j):1\le j\le m\}$
and the {\em $y$-th row} is the set $\{(i,y):1\le i\le n\}$.
Remark that in the books/papers of the graph theory, the vertices of
$G\Box H$ are usually denoted by $(u, v)$ with $u\in V(G)$ and $v\in V(H)$, and
vertices of $V(G)$ are drawn vertically and $V(H)$ horizontally. This is different
from the notation for the $x$-$y$ plane $\mathbb{R}^2$
in the coordinate geometry. In this
paper, we use the notion of geometry as we need to describe lattice points, line
segments and rays etc. In this way, a vertex $(x,y)$ in $G_{m,n}$ means $x\in V(P_n)$
and $y\in V(P_m)$.

Beaudouin-Lafon et al.~\cite{2020bccknt} gave an upper bound of $B(G_{m,n})$ in two cases.

\begin{theorem}[{\cite{2020bccknt}}]\label{upper}
For the grid graph $G_{m,n}$ with $n\ge m\ge 2$,
we have $B(G_{m,n})\le m(q+1)-\lfloor r/2\rfloor$
for $\lceil\frac{n-m}{m+1}\rceil \le \lfloor\frac{n-m}{m-1}\rfloor$
and $B(G_{m,n})\le n+m-\lfloor\frac{n-m}{m+1}\rfloor-2$
for $\lceil\frac{n-m}{m+1}\rceil > \lfloor\frac{n-m}{m-1}\rfloor$,
where $q=\lceil\frac{n-m}{m+1}\rceil$ and $r=(m+1)q-(n-m)$.
\end{theorem}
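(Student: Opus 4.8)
The plan is to prove the bound by constructing, in each of the two regimes, an explicit zero blocking set whose size equals the claimed quantity; since $B(G_{m,n})$ is a minimum, exhibiting one blocking set of a given size suffices for the upper bound. By the observation preceding the statement, it is enough to produce a nonempty white set $F$ that is already \emph{stalled}, i.e.\ such that every vertex outside $F$ has either $0$ or at least $2$ neighbours in $F$; coloring $F$ white and $V(G_{m,n})\setminus F$ black is then a failed configuration, so $F$ is a zero blocking set. Thus the whole proof reduces to designing such sets of the right sizes.

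For the main regime $\lceil\frac{n-m}{m+1}\rceil\le\lfloor\frac{n-m}{m-1}\rfloor$, I would take $F$ to be a \emph{diagonal zigzag}: a sequence of full-height diagonal segments of slope $\pm1$ (each a set of $m$ vertices running from the bottom row to the top row), consecutive segments having opposite slopes and being joined by a ``turn'' near the top or bottom edge, with the two extreme endpoints anchored at two opposite corners of the grid. The key point is that the stalled condition is automatic away from the endpoints: an interior vertex $(x,y)$ of a segment forces each of its black neighbours to also see the adjacent segment vertex $(x\pm1,y\pm1)$, giving two white neighbours, while a black vertex lying strictly between two segments sees none. A short local check shows the turns are also safe, so the only dangerous points are the two free tips of the zigzag, which is exactly why they must sit at corners (there the offending neighbour falls outside the grid).

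It then remains to count and to fit the zigzag into width $n$. A full-height segment advances the column index by $m-1$, and I would use three flavours of turn whose column advances are $m-1$ (a \emph{sharp} turn sharing one apex vertex), $m$ (a \emph{tight} turn whose two apices are adjacent), and $m+1$ (a \emph{gapped} turn leaving one black vertex between the apices). With $q+1$ full-height segments the base count is $m(q+1)$, and an all-gapped zigzag would overshoot the target width $n$ by exactly $r=(m+1)q-(n-m)$ columns. Absorbing this overshoot by converting $\lfloor r/2\rfloor$ gapped turns into sharp turns (each saving two columns and one vertex), together with one tight turn when $r$ is odd (saving the last column and no vertex), removes exactly $r$ columns while deleting exactly $\lfloor r/2\rfloor$ vertices, yielding $|F|=m(q+1)-\lfloor r/2\rfloor$. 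The regime hypothesis is precisely the inequality $r\le 2q$ that guarantees enough turns are available to absorb the overshoot, i.e.\ that the construction actually fits.

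In the complementary regime the grid is too close to square for $q+1$ full-height segments to fit, and I would instead use a single ``bent'' diagonal that runs along one near-diagonal and is capped at the boundary, spanning essentially a full row's and a full column's worth of vertices and accounting for the value $n+m-\lfloor\frac{n-m}{m+1}\rfloor-2$. I expect the main obstacle to be the boundary bookkeeping: verifying the stalled condition at every turn and at the two anchored corners, and---above all---showing that the overshoot $r$ can always be removed at a cost of exactly $\lfloor r/2\rfloor$ vertices (the parity split handled by the tight turn) rather than more, since it is this exact accounting, together with the degenerate second construction, that makes the count match the stated formula rather than merely bounding it from above.
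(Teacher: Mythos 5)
Two remarks before the main point: this statement is quoted from Beaudouin-Lafon et al.\ \cite{2020bccknt} and the present paper contains no proof of it, so your proposal can only be compared with the cited construction and with the extremal configurations implicit in the paper's lower-bound analysis. For the first regime your proposal is correct and is essentially the known construction: the reduction to exhibiting a nonempty \emph{stalled} white set is valid, the local checks at sharp, tight and gapped turns (column advances $m-1$, $m$, $m+1$) all go through, an all-gapped zigzag with $q+1$ full-height segments overshoots by exactly $r$, and converting $\lfloor r/2\rfloor$ gapped turns to sharp ones (plus one tight turn when $r$ is odd) is feasible precisely when $\lceil r/2\rceil\le q$, i.e.\ $r\le 2q$, which is exactly the first regime's hypothesis; the count $m(q+1)-\lfloor r/2\rfloor$ follows. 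Reassuringly, your three turn types correspond one-to-one to the spacings $m-1,m,m+1$ and the two kinds of holes ($y_{B_i}\in\{2,m+1\}$) that Proposition \ref{prop7} and case (1.2) of Theorem \ref{thm9} show are the only possibilities, so this half is sound and matches the structure the paper's lower bound is calibrated against.

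The genuine gap is the second regime, and it is a missing idea, not the ``boundary bookkeeping'' you anticipate. A single bent diagonal ``capped at the boundary'' can never be stalled: a diagonal tip at a boundary vertex that is not a corner always leaves some black vertex with exactly one white neighbour (for a tip $(c,d)$ of an ascending diagonal, the black vertex $(c+1,d)$ sees only $(c,d)$; for a tip on the right edge at $(n,d)$ with $d<m$, the vertex $(n,d+1)$ sees only $(n,d)$), a horizontal run along a boundary row fails because the black vertex directly above each interior run vertex sees exactly one white, and even a tight turn ending at a corner fails (with whites $(n-1,m-1),(n,m)$ at the end, the vertex $(n,m-1)$ sees only $(n,m)$). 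Since, as you correctly observe, no single corner-to-corner zigzag fits in this regime, no single-strand shape can work at all. The construction that does achieve $n+m-\lfloor\frac{n-m}{m+1}\rfloor-2$ uses \emph{two} parallel diagonal strands offset by two columns, whose tips mutually protect one another: for instance in $G_{5,6}$ (where the second bound gives $2m-1=9$) the nine whites $(1,5),(2,4),(3,3),(4,2),(5,1)$ together with $(3,5),(4,4),(5,3),(6,2)$ form a stalled set --- the dangerous vertex $(6,1)$ next to the tip $(5,1)$ is saved by $(6,2)$ on the second strand, and $(2,5)$ next to the tip $(3,5)$ is saved by $(1,5)$ and $(2,4)$ on the first. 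This doubled configuration is exactly the kind of crossing pattern (a bottom white $A$ and a top white $B$ with $|x_A-x_B|<m-1$) that case (1.1.1) of Theorem \ref{thm9} analyzes, and nothing in your sketch supplies it; as written, the second inequality of the theorem is unproved.
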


The purpose of this paper is to
prove that the upper bound is in fact the exact value of $B(G_{m,n})$.

\section{Terminology}\label{sec2}

Recall that elements in $\mathbb{R}^2$ are called points,
elements in $\mathbb{Z}^2$ are lattice points
and elements in $V(G_{m,n})$ are vertices.
Suppose $A,B,A_1,A_2,\ldots,A_k$ are points in $\mathbb{R}^2$.
\begin{itemize}
\item The coordinate representation of a point $A$ is $(x_A,y_A)$.

\item The {\em line segment} between $A$ and $B$ is
$$\overline{AB} = \{((1-t)x_A+tx_B,(1-t)y_A+ty_B): 0\le t\le 1\}.$$

\item Denote $\overline{A_1 A_2 \ldots A_k}=\bigcup_{i=1}^{k-1} \overline{A_i A_{i+1}}$.

\item The {\em northeast ray starting at $A$} is $\NEarrow{A}=\{(x_A+t,x_A+t): t\ge 0\}$.
Similarly define
$\SEarrow{A}$, $\NWarrow{A}$, $\SWarrow{A}$,
$\Eastarrow{A}$, $\Westarrow{A}$, $\Northarrow{A}$ and $\Southarrow{A}$
for the directions {\em southeast}, {\em northwest},
{\em southwest}, {\em east}, {\em west}, {\em north} and {\em south}, respectively.

\item The {\em northeast point from $A$ at distance $d$} is $\NEArrow{A}{d}=(x_A+d,y_A+d)$.
Similarly define
$\SEArrow{A}{d}$, $\NWArrow{A}{d}$, $\SWArrow{A}{d}$,
$\EastArrow{A}{d}$, $\WestArrow{A}{d}$, $\NorthArrow{A}{d}$ and $\SouthArrow{A}{d}$.

\item For $B\in\NEarrow{A}$, denote
$\overline{AB^+}=\overline{A\NEArrow{B}{1/2}}$ and
$\overline{AB^-}=\overline{A\SWArrow{B}{1/2}}$.

\item Denote $\North(A)$ the set
$\{\NWArrow{A}{1},\NorthArrow{A}{1},\NEArrow{A}{1},\NorthArrow{A}{2}\}$.
Similarly define $\South(A)$, $\East(A)$ and $\West(A)$.
\end{itemize}

For $A\in \mathbb{R}^2$ and $S\subseteq \mathbb{R}^{2}$,
we say that $A$ is {\em below} $S$
if there exists $B\in S$ such that $x_A= x_B$ and $y_A\le y_B$,
and {\em strictly below} $S$ if $x_A=x_B$ and $y_A<y_B$.
Similarly define {\em above} and {\em left to} and {\em right to}.

First two useful lemmas.

\begin{lemma}[\cite{2020bccknt}] \label{lem2}
If $A=(x_A,y_A)$ is a vertex in a minimum zero blocking set ${\cal W}$ of $G_{m,n}$
with $y_A<m$, then ${\cal W}$ intersects $\North(A)$.
Similar properties hold for $y_A>1$ with $\South(A)$,
$x_A<n$ with $\East(A)$, and $x_A>1$ with $\West(A)$.
\end{lemma}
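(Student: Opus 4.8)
The plan is to reduce the statement to the \emph{stalled} structure of a minimum zero blocking set together with a direct reading of the four points comprising $\North(A)$. Because $\mathcal{W}$ is a zero blocking set of minimum size $B(G_{m,n})$, I may invoke the fact noted in the introduction that no black vertex of $G_{m,n}$ is adjacent to exactly one white vertex; equivalently, every black vertex has either no white neighbor or at least two white neighbors. I will pair this with the elementary remark that, setting $B=\NorthArrow{A}{1}=(x_A,y_A+1)$ (the north neighbor of $A$, which exists since $y_A<m$), the set $\North(A)$ consists exactly of $B$ together with the three other potential neighbors $\NWArrow{A}{1}$, $\NEArrow{A}{1}$, $\NorthArrow{A}{2}$ of $B$. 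Consequently every grid neighbor of $B$ other than $A$ lies in $\North(A)$.

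Granting these, the argument is a two-way split on the color of $B$. If $B$ is white, then $B\in\mathcal{W}\cap\North(A)$ and we are finished. If $B$ is black, then $B$ has the white neighbor $A$, so by the stalled condition $B$ cannot be adjacent to exactly one white vertex; it therefore has a second white neighbor $C\ne A$. Since $C$ is a grid neighbor of $B$ distinct from $A$, the remark above gives $C\in\North(A)$, so again $\mathcal{W}\cap\North(A)\ne\emptyset$. This settles the north case.

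The remaining three assertions follow by the same argument rather than by an appeal to any symmetry of the grid. The definitions of $\South(A)$, $\East(A)$ and $\West(A)$ are perfectly parallel: each is the neighbor of $A$ in the named direction together with that neighbor's three other potential neighbors, and the existence of the relevant neighbor is guaranteed by $y_A>1$, $x_A<n$ and $x_A>1$ respectively. Replacing $B$ by the south, east, or west neighbor of $A$ reproduces the case analysis verbatim.

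I expect no serious obstacle. The only place that warrants care is the bookkeeping at the boundary; for instance when $y_A=m-1$ the point $\NorthArrow{A}{2}$ is not a vertex, and when $x_A\in\{1,n\}$ one of $\NWArrow{A}{1}$, $\NEArrow{A}{1}$ is absent. This never creates a gap, because in the black-$B$ case the witnessing second white neighbor $C$ is by construction an \emph{actual} grid neighbor of $B$, hence automatically one of the genuine points listed in $\North(A)$. Thus the whole content of the lemma rests on the stalled-configuration fact, which is available precisely because $\mathcal{W}$ is a \emph{minimum} blocking set.
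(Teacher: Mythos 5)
Your proof is correct: identifying $\North(A)$ as the north neighbor $B=\NorthArrow{A}{1}$ of $A$ together with precisely the grid neighbors of $B$ other than $A$, and then applying the stalled property of a minimum zero blocking set (no black vertex with exactly one white neighbor) to force either $B$ or a second white neighbor of $B$ into $\North(A)$, is exactly the standard argument. Note that the paper itself gives no proof of this lemma but cites it from \cite{2020bccknt}; your argument, including the boundary bookkeeping, is the same natural one used there, so there is nothing further to compare.
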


\begin{lemma}[\cite{2020bccknt}] \label{lem3}
Every minimum zero blocking set of $G_{m,n}$ intersects
the first column, the last column and any pair of consecutive columns, respectively.
Similar property holds for rows.
\end{lemma}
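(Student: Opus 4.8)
The plan is to derive all three column statements directly from the directional form of Lemma~\ref{lem2}, and then obtain the row statements by transposing the grid. Throughout, let $\mathcal{W}$ be a minimum zero blocking set. I will use the \emph{West} analogue of Lemma~\ref{lem2}, which asserts that every white vertex $A=(x_A,y_A)$ with $x_A>1$ has a white vertex in $\West(A)$, all four points of which lie in columns $x_A-1$ and $x_A-2$; the \emph{East} analogue is symmetric, placing a white vertex in column $x_A+1$ or $x_A+2$.

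For the first column, I would pick a white vertex $A$ whose column index $x_A$ is smallest possible. If $x_A>1$, then the West analogue forces a white vertex strictly to the left of column $x_A$, contradicting the minimality of $x_A$; hence $x_A=1$, so $\mathcal{W}$ meets the first column. The last column follows identically from the East analogue applied to a white vertex of largest column index.

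For consecutive columns, suppose for contradiction that columns $x$ and $x+1$ are both entirely black for some $1\le x\le n-1$. If $x=1$ or $x+1=n$, this contradicts the first/last column statements just proved, so assume $2\le x\le n-2$; then column $n$ lies to the right of the forbidden pair and still contains a white vertex. Among all white vertices in columns $\ge x+2$, choose one, $A$, of smallest column index $x_A\ge x+2$. The West analogue then produces a white vertex $B\in\West(A)$ in column $x_A-1$ or $x_A-2$, hence in a column that is at least $x$. If that column is $\ge x+2$, then $B$ is a white vertex to the right of the pair with column index smaller than $x_A$, contradicting the choice of $A$; if it equals $x$ or $x+1$, then $B$ lies in the all-black pair, which is impossible. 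Either way we reach a contradiction, so no two consecutive columns can both be white-free. The row statements then follow by applying these same three arguments to the transposed grid, with the \North{} and \South{} analogues of Lemma~\ref{lem2} replacing \West{} and \East{}.

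I expect the consecutive-columns step to be the main obstacle, since it requires selecting the extremal white vertex on the correct side of the forbidden pair and verifying that the set $\West(A)$ cannot ``escape'' either into the all-black columns or to the left of the chosen vertex; the boundary cases $x=1$ and $x+1=n$ must also be separated out and dispatched via the first and last column results before the interior argument applies.
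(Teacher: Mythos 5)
Your proof is correct, and it is worth noting at the outset that this paper contains no proof of Lemma~\ref{lem3} to compare against: the lemma is imported from \cite{2020bccknt}. Your extremal-choice arguments are sound: for the first (resp.\ last) column, a white vertex of minimal (resp.\ maximal) column index with $x_A>1$ (resp.\ $x_A<n$) would contradict the $\West$ (resp.\ $\East$) analogue of Lemma~\ref{lem2}, since any white vertex of $\West(A)$ is an actual vertex of $G_{m,n}$ in column $x_A-1$ or $x_A-2$; and in the consecutive-pair case your case split ($x=1$, $x+1=n$, $2\le x\le n-2$) is exhaustive, with $B\in\West(A)\cap\mathcal{W}$ forced either into the all-black pair $\{x,x+1\}$ or into a column in $[x+2,x_A-1]$, both impossible. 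There is also no circularity, since Lemma~\ref{lem2} is established in \cite{2020bccknt} independently of Lemma~\ref{lem3} (an all-black $\North(A)$ makes $\NorthArrow{A}{1}$ a black vertex whose unique white neighbor is $A$, violating the stalledness of a minimum blocking set). That said, your route is heavier than the standard direct argument, which needs no neighborhood lemma at all: if two adjacent columns $x,x+1$ were entirely black, each black vertex $(x+1,y)$ would have $(x+2,y)$ as its only possibly white neighbor, so forcing sweeps column by column across the whole grid (and similarly leftward from column $x$, and from an all-black boundary column into column $2$), making every vertex black and contradicting that $\mathcal{W}$ is a blocking set in the first place. The direct sweep is more elementary and proves the stronger statement for \emph{every} zero blocking set, whereas your derivation genuinely requires minimality, because Lemma~\ref{lem2} encodes the stalled condition that only minimum blocking sets are guaranteed to satisfy. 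One pedantic point: your extremal choices presuppose $\mathcal{W}\neq\emptyset$, which holds since a zero blocking set by definition leaves a white vertex at the end of the process.
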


\section{Basic properties}\label{sec3}

Now we choose a minimum zero blocking set $\cal W$ of the grid graph $G_{m,n}$.
Let $X=(1,1)$, $Y=(n,1)$, $Z=(1,m)$ and $W=(n,m)$
be the four {\em corner} vertices of $G_{m,n}$,
Suppose $A_1,A_2,\ldots,A_k$ are the white vertices in $\overline{XY}$ from left to right,
that is, $\{A_1,A_2,\ldots,A_k\}=\overline{XY}\cap{\cal W}$ with
$1\le x_{A_1} <x_{A_2} <\cdots<x_{A_k} \le n$ and $y_{A_i}=1$ for $i=1,2,\ldots,k$.
Name the following points
\begin{itemize}
\item $B_0 = \Northarrow{X} \cap \NWarrow{A_1} = (1,x_{A_1})$,

\item $B_i = \NEarrow{A_i} \cap \NWarrow{A_{i+1}}
           = (\frac{x_{A_{i+1}}+x_{A_i}}{2},\frac{x_{A_{i+1}}-x_{A_i}}{2}+1)$
      for $1\le i\le k-1$,

\item $B_k = \NEarrow{A_k} \cap \Northarrow{Y} = (n,n+1-x_{A_k})$.
\end{itemize}
Denote $S_{\overline{XY}}=\overline{B_0A_1B_1A_2\ldots B_k}$,
see Figure \ref{fig1} for an example.
Similarly define $S_{\overline{ZW}}$, $S_{\overline{XZ}}$ and $S_{\overline{YW}}$.

\begin{figure}[htb]
\centering
\begin{tikzpicture}[scale=3/4]
\draw[very thick,gray!50] (1,1) grid (11,6);
\draw (1,1) node [below left] {$X$} (11,1) node [below right] {$Y$}
      (1,6) node [above left] {$Z$} (11,6) node [above right] {$W$};
\draw[very thick] (1,3)--(3,1)--(5.5,3.5)--(8,1)--(9,2)--(10,1)--(11,2);
\draw[very thick,fill=white]
(3,1) circle (.15) node [below] {$A_1$}
(8,1) circle (.15) node [below] {$A_2$}
(10,1) circle (.15) node [below] {$A_3$};
\draw
(1,3) node [left] {$B_0$}
(5.5,3.5) node [above] {$B_1$}
(9,2) node [above] {$B_2$}
(11,2) node [right] {$B_3$};
\end{tikzpicture}
\caption{An example of $S_{\overline{XY}}$ in $G_{6,11}$.} \label{fig1}
\end{figure}
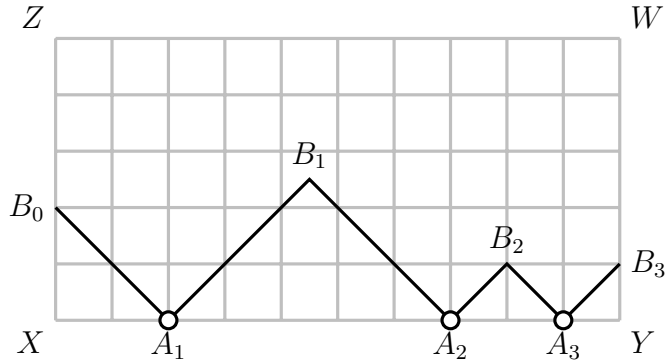

First, we consider some basic properties that are useful in this paper.

\begin{proposition}\label{prop4}
The vertices strictly below $S_{\overline{XY}}$ are black,
and the vertices in $S_{\overline{XY}} \setminus\{B_1,B_2,\ldots,B_{k-1}\}$ are white.
\end{proposition}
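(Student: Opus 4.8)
The only structural input I plan to use is the fact recorded in the Introduction: since $\mathcal W$ is a minimum zero blocking set the colouring is already \emph{stalled}, i.e.\ no black vertex is adjacent to exactly one white vertex, so every black vertex has either no white neighbour or at least two. I will also use that $S_{\overline{XY}}$ is piecewise linear with slopes $\pm1$ and that its valleys $A_i$ sit at height $1$; consequently, in each integer column $x\in\{1,\dots,n\}$ the path has an \emph{integer} height, which I will write $h(x)$, and $|h(x{+}1)-h(x)|\le1$. A lattice point $(x,y)$ is then strictly below the path exactly when $y<h(x)$, and lies on the path exactly when $y=h(x)$. The two assertions will be proved in order, the second using the first.

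For the first assertion I would induct on the row index $y$. When $y=1$, a below-path vertex is a bottom-row vertex distinct from every $A_i$, hence black by the choice of $A_1,\dots,A_k$. For the inductive step, let $V=(x_V,y_0)$ be strictly below the path with $y_0\ge2$ and suppose, for contradiction, that $V\in\mathcal W$. The vertex $D=(x_V,y_0-1)$ is again strictly below the path (as $y_0-1<h(x_V)$) and is black by the induction hypothesis. Its remaining neighbours $(x_V,y_0-2)$ and $(x_V\pm1,y_0-1)$ are strictly below the path as well, the latter because $h(x_V\pm1)\ge h(x_V)-1\ge y_0$ by the slope bound; so all of them are black by induction. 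Then $V$ would be the unique white neighbour of $D$, contradicting the stalled property, and hence $V$ is black.

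For the second assertion I would propagate whiteness along each maximal $\pm1$ segment of $S_{\overline{XY}}$, starting from a valley. On an ascending segment take a path vertex $P=(x,y)$ with $x>x_{A_i}$. The vertex $Q=(x,y-1)$ directly below $P$ is black by the first assertion, and its neighbours $(x,y-2)$ and $(x+1,y-1)$ both have height below $h$ in their columns (here $h(x+1)\ge y$ because the segment ascends at $P$), hence are black; the only possible white neighbours of $Q$ are $P$ and the path vertex $(x-1,y-1)$ one step toward the valley. The stalled property applied to $Q$ then forces $P$ to be white \emph{iff} $(x-1,y-1)$ is white, and since $A_i\in\mathcal W$, induction along the segment makes every lattice point of the segment white up to its top; the descending segments are symmetric. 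It remains to treat the peaks. Below each end peak $B_0,B_k$ the vertex sits in the boundary column $1$ or $n$, so it has a single flanking path neighbour besides the peak, and the stalled property forces that peak white (in the degenerate cases $A_1=X$ or $A_k=Y$ the peak is a valley and is white trivially). By contrast, the vertex below an interior peak $B_i$ has \emph{two} flanking white path neighbours, one on each adjoining segment, so it is already stalled no matter the colour of $B_i$; this is exactly why $B_1,\dots,B_{k-1}$ must be excluded.

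The routine inductions are easy; the delicate point will be the bookkeeping at the peaks and on the boundary. The whole argument turns on the asymmetry \emph{two flanking white neighbours below an interior peak versus one below an end peak}, which is precisely what distinguishes the white points of the path from the excluded $B_1,\dots,B_{k-1}$. The other place requiring care is verifying the slope-bound claims that keep the auxiliary neighbours strictly below the path, so that they are black and cannot by themselves satisfy the stalled condition.
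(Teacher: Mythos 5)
Your proposal is correct, and its first half is essentially the paper's own argument: the paper picks a white vertex strictly below $S_{\overline{XY}}$ with $y$-coordinate minimal and applies Lemma~\ref{lem2} with $\South(A)$, which is precisely your induction on the row index with the stalled property inlined (the stalled condition --- no black vertex has exactly one white neighbour --- is exactly what Lemma~\ref{lem2} packages). For the second half the routes genuinely diverge. The paper again uses a single minimal counterexample: it takes a black vertex $B\in S_{\overline{XY}}\setminus\{B_1,\ldots,B_{k-1}\}$ with $y_B$ minimal, notes that its down-path neighbour $C\in\{\SWArrow{B}{1},\SEArrow{B}{1}\}$ is white by minimality, and then applies Lemma~\ref{lem2} \emph{horizontally}: every vertex of $\East(C)$ (or $\West(C)$) other than $B$ lies strictly below the path, hence is black by the first part, a contradiction. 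This treats interior peaks, the end peaks $B_0,B_k$, and the boundary columns uniformly, with no separate bookkeeping. You instead propagate whiteness valley-to-peak using the stalled condition at the vertex $Q$ vertically below each path point. That buys self-containedness --- you never invoke the distance-two sets of Lemma~\ref{lem2}, only the stalled property stated in the introduction --- at the cost of the case analysis you flagged, all of which does check out: your condition $h(x+1)\ge y$ correctly covers half-integer peaks, interior integer peaks are rightly unreachable (the vertex below them has two flanking white path neighbours), and at $B_0,B_k$ the flanking path vertex is already white by propagation, so $Q$ needs a second white neighbour, forcing the peak white. One point to make explicit in a write-up: Proposition~\ref{prop5} is proved \emph{after} this statement, so at this stage the path may a priori rise above row $m$; your argument survives because the propagation moves upward from valleys and every auxiliary vertex it inspects has row index at most that of the current path point, but the end-peak step should be asserted only when $B_0$ or $B_k$ is actually a vertex of $G_{m,n}$, since only those are claimed white by the proposition.
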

\begin{proof}
Suppose to the contrary that there exists a white vertex strictly below $S_{\overline{XY}}$.
Choose such a white vertex $A$ with $y_A$ minimum.
Then $y_A\ge 2$, and all vertices in $\South(A)$ are black
and strictly below $S_{\overline{XY}}$,
which contradicts Lemma \ref{lem2}.
So the vertices strictly below $S_{\overline{XY}}$ are black.
Next, suppose to the contrary that there exists a black vertex in
$S_{\overline{XY}} \setminus \{B_1,B_2,\ldots,B_{k-1}\}$.
Choose such a black vertex $B$ with $y_B$ minimum.
Then $y_B\ge 2$, and either $\SWArrow{B}{1}$ or $\SEArrow{B}{1}$
is a white vertex in $S_{\overline{XY}}\setminus\{B_1,B_2,\ldots,B_{k-1}\}$.
If $C=\SWArrow{B}{1}$ (respectively, $C=\SEArrow{B}{1}$) is white, then
$x_C\le n-1$ (respectively, $x_C\ge 2$) and all vertices in
$\East(C)\setminus\{B\}$ (respectively, $\West(C)\setminus\{B\}$)
are strictly below $S_{\overline{XY}}$ and hence black.
Together with $B$, the vertices in $\East(C)$ (respectively, $\West(C)$) are black,
which contradicts Lemma \ref{lem2}.
Therefore, the vertices in $S_{\overline{XY}}\setminus\{B_1,B_2,\ldots,B_{k-1}\}$ are white.
\end{proof}

\begin{proposition}\label{prop5}
In $S_{\overline{XY}}$,
we have $y_{B_i}\le m+1$ for $1\le i\le k-1$ and $y_{B_i}\le m$ for $i\in\{0,k\}$.
Consequently, the lattice points in
$S_{\overline{XY}} \setminus \{B_1,B_2,\ldots,B_{k-1}\}$ are vertices of $G_{m,n}$.
\end{proposition}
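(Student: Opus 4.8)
The plan is to establish the three height inequalities by contradiction, each time producing a white top-row vertex all of whose relevant neighbours are forced to be black, in violation of Lemma~\ref{lem2}; the final ``consequently'' clause then follows by an elementary reading of the geometry. Note first that $y_{B_0}=x_{A_1}$, that $y_{B_k}=n+1-x_{A_k}$, and that $y_{B_i}=\tfrac{1}{2}(x_{A_{i+1}}-x_{A_i})+1$, so the three claims say exactly that $x_{A_1}\le m$, that $x_{A_k}\ge n+1-m$, and that consecutive bottom-row white vertices satisfy $x_{A_{i+1}}-x_{A_i}\le 2m$. Throughout I will use Proposition~\ref{prop4}: the lattice points of $S_{\overline{XY}}$ other than the apexes $B_1,\dots,B_{k-1}$ are white, while everything strictly below $S_{\overline{XY}}$ is black.

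For an interior apex $B_i$, suppose to the contrary that $x_{A_{i+1}}-x_{A_i}\ge 2m+1$, so the left leg $\overline{A_iB_i}$ (of slope $+1$) rises above height $m$. Let $P=\NEArrow{A_i}{m-1}=(x_{A_i}+m-1,m)$ be the topmost grid vertex on this leg. Since $P$ lies on $S_{\overline{XY}}$ strictly below $B_i$ and is not an excluded apex, it is white by Proposition~\ref{prop4}, and the gap assumption forces $x_P<n$, so Lemma~\ref{lem2} applies in the East direction. I will then check that no point of $\East(P)=\{\NEArrow{P}{1},\EastArrow{P}{1},\SEArrow{P}{1},\EastArrow{P}{2}\}$ is white: the point $\NEArrow{P}{1}$ sits at height $m+1$ off the grid, while over the two columns immediately east of $P$ the polyline $S_{\overline{XY}}$ stays at height at least $m+1$ (its apex exceeds $m+1$ and each leg drops only one unit per column, and the right-hand return to height $m$ occurs no earlier than column $x_P+3$), so the grid vertices $\EastArrow{P}{1}$, $\SEArrow{P}{1}$, $\EastArrow{P}{2}$ all lie strictly below $S_{\overline{XY}}$ and are black. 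This contradicts Lemma~\ref{lem2}, giving $y_{B_i}\le m+1$.

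For $B_0$, suppose $x_{A_1}\ge m+1$, so the leg $\overline{B_0A_1}$ (of slope $-1$) begins above the grid. Let $P'=(x_{A_1}-m+1,m)$ be the topmost grid vertex on this leg; it is white by Proposition~\ref{prop4}, and $x_{P'}\ge 2$, so Lemma~\ref{lem2} applies in the West direction. Over the columns immediately west of $P'$ the polyline sits at height $\ge m+1$, so $\WestArrow{P'}{1}$ and $\SWArrow{P'}{1}$ are black and $\NWArrow{P'}{1}$ is off the grid; the only delicate point is $\WestArrow{P'}{2}$, which falls off the grid precisely in the boundary sub-case $x_{A_1}=m+1$ (and is otherwise a black grid vertex below $S_{\overline{XY}}$). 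In either sub-case $\West(P')$ meets no white vertex, contradicting Lemma~\ref{lem2}, so $y_{B_0}=x_{A_1}\le m$. The bound $y_{B_k}\le m$ follows by the mirror-image argument under the reflection $x\mapsto n+1-x$, using the East direction.

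The main obstacle is organising this case analysis cleanly: in each contradiction one must verify that the chosen top-row vertex genuinely lies on $S_{\overline{XY}}$ (so Proposition~\ref{prop4} makes it white) and that all four test neighbours are simultaneously non-white, which forces one to locate the apex relative to the two columns east (resp.\ west) of the vertex; the boundary sub-case $x_{A_1}=m+1$ for $B_0$ is where the pattern is most fragile. Granting the three inequalities, the ``consequently'' clause is immediate: along $S_{\overline{XY}}$ the $x$-coordinate increases monotonically from $1$ at $B_0$ to $n$ at $B_k$, every valley sits at height $1$, and the only lattice points that can reach height $m+1$ are the apexes $B_1,\dots,B_{k-1}$ themselves (attained exactly when $y_{B_i}=m+1$), which are excluded from the set; since $y_{B_0},y_{B_k}\le m$, every remaining lattice point $(x,y)$ satisfies $1\le x\le n$ and $1\le y\le m$ and is therefore a vertex of $G_{m,n}$.
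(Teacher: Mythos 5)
Your proof is correct, but it takes a genuinely different route from the paper's. The paper deduces all three bounds in a few lines from Lemma~\ref{lem3} combined with Proposition~\ref{prop4}: if $y_{B_i}>m+1$ for some $1\le i\le k-1$, then $x_{A_{i+1}}-x_{A_i}>2m$, so the two consecutive columns $x_{A_i}+m$ and $x_{A_i}+m+1$ lie entirely strictly below $S_{\overline{XY}}$ and are therefore all black, contradicting the fact that ${\cal W}$ meets every pair of consecutive columns; likewise $y_{B_0}>m$ (or $y_{B_k}>m$) would leave the first (or last) column all black, contradicting the same lemma. You instead bypass Lemma~\ref{lem3} entirely and argue from Lemma~\ref{lem2}, placing a white top-row vertex $P$ (resp.\ $P'$) on the offending leg and verifying that every member of $\East(P)$ (resp.\ $\West(P')$) is black or off the grid. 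Your local checks are sound: the polyline does stay at height at least $m+1$ over the two columns east of $P$ (the apex satisfies $x_{B_i}\ge x_{A_i}+m+\tfrac12$ and the right leg returns to height $m$ no earlier than column $x_P+3$), the bound $x_P<n$ follows from $x_P\le x_{A_{i+1}}-m-2$, and the boundary sub-case $x_{A_1}=m+1$, where $\WestArrow{P'}{2}$ leaves the grid, is handled correctly. In effect you have re-proved by hand the instances of Lemma~\ref{lem3} that the paper simply quotes from \cite{2020bccknt}. The paper's route buys brevity and uniformity --- one global structural fact (column intersection) dispatches interior apexes and endpoints alike, with no directional case analysis and no fragile boundary cases --- while your route buys self-containedness at the level of Lemma~\ref{lem2}, at the cost of the neighborhood bookkeeping you yourself identify as the delicate part. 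Your reading of the ``consequently'' clause (the only lattice points of $S_{\overline{XY}}$ that can reach height $m+1$ are interior apexes with $y_{B_i}=m+1$, which are excluded) is also correct and matches what the paper leaves implicit.
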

\begin{proof}
If there exists $y_{B_i}>m+1$ for some $1\le i\le k-1$,
then $x_{A_{i+1}}-x_{A_i}= 2(y_{B_i}-1)>2m$.
By Proposition \ref{prop4},
the $(x_{A_i}+m)$-th and
the $(x_{A_i}+m+1)$-th columns are a pair of consecutive columns
which ${\cal W}$ does not intersect,
contradicting Lemma \ref{lem3}.
If there exists $y_{B_i}>m$ for some $i\in\{0,k\}$,
then by Proposition \ref{prop4},
${\cal W}$ does not intersect the first column or the last ($n$-th) column,
contradicting Lemma \ref{lem3}.
\end{proof}

For $S\subseteq\mathbb{R}^2$,
let $[S]=|S\cap\mathbb{Z}^2|$ denote the number of lattice points in $S$.
For a white vertex $A$ on $\overline{XY}$ and $B\in \NEarrow{A}$ with $x_B\le n$,
we denote $S_{\overline{XY}}(\overline{AB})=\{C\in S_{\overline{XY}}: x_A\le x_C\le x_B\}$.
Then $[S_{\overline{XY}}(\overline{AB})]=[\overline{AB}]$.
For each $B_i\in S_{\overline{XY}}(\overline{AB})$
which is a lattice point but not a white vertex,
we have $1\le i\le k-1$ and $\SEArrow{B_i}{1}\in S_{\overline{XY}}$ is a white vertex.
If $x_{B_i}+1>x_B$, then let $B_i'=\SEArrow{B_i}{1}$.
Otherwise, $x_{B_i}+1\le x_{B}$.
Let $d_i=\max\{d: \NEArrow{(\SEArrow{B_i}{1})}{d}\in
S_{\overline{XY}}(\overline{AB})\cap {\cal W}\}$
and $C_i=\NEArrow{(\SEArrow{B_i}{1})}{d_i}$.
If $y_{C_i}\le m-1$, then ${\cal W}$ intersects $\North(C_i)$ by Lemma \ref{lem2} and
let $B_i'$ be a white vertex in $\North(C_i)$.
Otherwise, $y_{C_i}=m$,
and we say $B_i$ is a {\em hole} of $S_{\overline{XY}}(\overline{AB})$.
Let $W_{\overline{XY}}(\overline{AB})$
be the set of white vertices in $S_{\overline{XY}}(\overline{AB})$
together with those $B_i'$,
and let $H_{\overline{XY}}(\overline{AB})$ be the set of holes.
See Figure \ref{fig2} for an example.
Similarly we may define
$W_{\overline{XY}}(\overline{AB})$ and $H_{\overline{XY}}(\overline{AB})$
for $B\in\NWarrow{A}$.

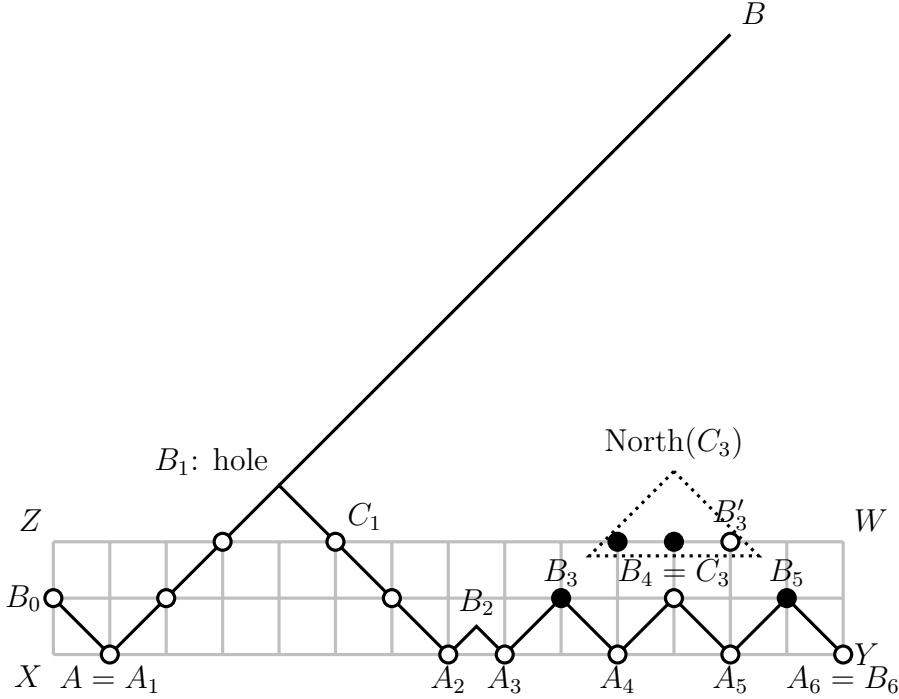
\begin{figure}[htb]
\centering
\begin{tikzpicture}[scale=3/4]
\draw[very thick,gray!50] (1,1) grid (15,3);
\draw (1,1) node [below left] {$X$} (15,1) node [right] {$Y$}
      (1,3) node [above left] {$Z$} (15,3) node [above right] {$W$};
\draw[very thick] (1,2)--(2,1)--(5,4)--(8,1)--(8.5,1.5)--(9,1)--(10,2)--(11,1)--(12,2)
--(13,1)--(14,2)--(15,1) (5,4)--(13,12);
\draw[very thick,fill=white]
(2,1) circle (.15) node [below] {$A=A_1$}
(8,1) circle (.15) node [below] {$A_2$}
(9,1) circle (.15) node [below] {$A_3$}
(11,1) circle (.15) node [below] {$A_4$}
(13,1) circle (.15) node [below] {$A_5$}
(15,1) circle (.15) node [below] {$A_6=B_6$}
(1,2) circle (.15) node [left] {$B_0$}
(3,2) circle (.15) (4,3) circle (.15)
(6,3) circle (.15) node [above right] {$C_1$}
(7,2) circle (.15)
(12,2) circle (.15) node [above] {$B_4=C_3$}
(13,3) circle (.15) node [above] {$B_3'$};
\draw[very thick,fill]
(10,2) circle (.15) node [above] {$B_3$}
(14,2) circle (.15)node [above] {$B_5$}
(12,3)circle (.15) (11,3)circle (.15) ;
\draw
(5,4) node [above left] {$B_1$: hole}
(8.5,1.5) node [above] {$B_2$}
(13,12) node [above right] {$B$};
\draw[very thick,dotted]
 (12,4.25)--(10.5,2.75)--(13.5,2.75)--(12,4.25) node[above] {$\North(C_3)$} ;
\end{tikzpicture}
\caption{An example for $B_i'$ and a hole.} \label{fig2}
\end{figure}

\begin{proposition}\label{prop6}
If $A$ is a white vertex on $\overline{XY}$ and $B\in\NEarrow{A}$ with $x_B\le n$,
then the following properties of $W_{\overline{XY}}(\overline{AB})$ hold.
\begin{enumerate}
\item[(1)]
$|W_{\overline{XY}}(\overline{AB})|=[\overline{AB}]-|H_{\overline{XY}}(\overline{AB})|$.

\item[(2)]
The vertices in $W_{\overline{XY}}(\overline{AB})$
are below $\overline{AB}\cup\SEarrow{B}$,
more precisely, $\overline{AB\SEArrow{B}{1}}$.

\item[(3)]
If $B_i$ is a hole with $x_{B_i}=2$,
then the vertices in $W_{\overline{XY}}(\overline{AB})$ are below
$\NWarrow{(\NWArrow{B_i}{1})} \cup \NEarrow{(\SEArrow{B_i}{1})}$.
\end{enumerate}
\end{proposition}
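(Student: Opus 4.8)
The plan is to analyze, in each part, the portion of the zigzag $S_{\overline{XY}}$ lying in the columns $x_A\le x\le x_B$ and to keep track of how replacing a black lattice peak $B_i$ by its lifted vertex $B_i'$ moves things. Write $A=A_{i_0}$. For part (1) I would count lattice points. By Proposition~\ref{prop4} every lattice point of $S_{\overline{XY}}(\overline{AB})$ is a white vertex, except possibly for the interior peaks $B_1,\dots,B_{k-1}$ that happen to be black lattice points; hence, using the identity $[S_{\overline{XY}}(\overline{AB})]=[\overline{AB}]$, the number $[\overline{AB}]$ is the number of white vertices in the range plus the number of black lattice peaks there. Each such black peak is either declared a hole or assigned a replacement $B_i'$, so it remains to check that the assigned $B_i'$ are pairwise distinct and distinct from the white vertices already counted. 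This holds because a $B_i'$ of the form $\SEArrow{B_i}{1}$ sits in column $x_{B_i}+1>x_B$, outside the range, whereas a $B_i'$ chosen in $\North(C_i)$ lies strictly above $C_i$, the topmost white vertex reachable by the northeast tracing, and so cannot lie on $S_{\overline{XY}}$ inside the range. Granting this, $|W_{\overline{XY}}(\overline{AB})|=[\overline{AB}]-|H_{\overline{XY}}(\overline{AB})|$.

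For part (2) the natural device is to measure height against the line $\ell$ through $A$ that contains the ray $\NEarrow{A}$ (and hence $\overline{AB}$). Setting $f(P)=y_P-x_P-(1-x_A)$, a point $P$ is weakly below $\ell$ exactly when $f(P)\le 0$. Every point of $S_{\overline{XY}}$ in columns $\ge x_A$ satisfies $f\le 0$, since the ascending run out of $A$ lies on $\ell$ and the zigzag never rises above $\ell$ afterwards; in particular all white vertices of the range are below $\overline{AB}$. For a black peak $B_i$ with replacement in $\North(C_i)$, a short computation gives $y_{C_i}-x_{C_i}=(1-x_{A_i})-2$, so $f(C_i)=x_A-x_{A_i}-2\le -2$ because $x_{A_i}\ge x_A$; since each point of $\North(C_i)$ raises $f$ by at most $2$, also $f(B_i')\le 0$. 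This settles every vertex of $W_{\overline{XY}}(\overline{AB})$ in columns $\le x_B$. The only vertices in column $x_B+1$ are $B_i'=\SEArrow{B_i}{1}$ for a boundary peak $x_{B_i}=x_B$, or $B_i'=\NEArrow{C_i}{1}$ with $x_{C_i}=x_B$, and in both cases $x_{A_i}\ge x_A$ yields $y_{B_i'}\le x_B-x_A$, which is exactly the height of $\overline{B\SEArrow{B}{1}}$ at that column; thus $W_{\overline{XY}}(\overline{AB})$ is below $\overline{AB\SEArrow{B}{1}}$.

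For part (3) I would first show the hypotheses pin down the configuration. As $B_i$ is a lattice peak with $x_{B_i}=2$, its left valley is $A_i=(3-y_{B_i},1)$, forcing $y_{B_i}\le 2$; and $y_{B_i}\ge 2$ since $B_i$ is a lattice point off the bottom row, so $y_{B_i}=2$, $A_i=X$, $B_i=B_1=(2,2)$, and $\SEArrow{B_i}{1}=A_2=(3,1)$, while $B_1$ lying in the range forces $A=A_1=X$. Now column~$1$ is governed by $\NWarrow{(\NWArrow{B_i}{1})}$ and columns $\ge 3$ by $\NEarrow{(\SEArrow{B_i}{1})}$, which is precisely the ascending diagonal through $A_2$. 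For column~$1$ the only vertex of $W_{\overline{XY}}(\overline{AB})$ is $X$, below $\NWArrow{B_i}{1}=(1,3)$: no replacement vertex can land in column~$1$ or in the uncovered column~$2$, because no white zigzag vertex occupies the black peak column~$2$ and no $C_j$ occupies column~$1$ or~$2$. For columns $\ge 3$ I would rerun the argument of (2) with $A_2$ in place of $A$; every black peak $B_j$ in the range then has $x_{A_j}\ge x_{A_2}=3$, so the analogous estimate gives $f_{A_2}(C_j)\le -2$, and all white vertices and all $B_j'$ lie below the line $y=x-2$, that is, below $\NEarrow{(\SEArrow{B_i}{1})}$.

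The hard part will be the verification in (2): locating $C_i$ exactly relative to the reference line and then checking, across the four points of $\North(C_i)$ and the two ways $B_i'$ may be defined, that the lifted vertex never exceeds $\overline{AB\SEArrow{B}{1}}$ — especially in the boundary column $x_B+1$, where the southeast extension $\SEArrow{B}{1}$ is needed to within exactly one unit. The distinctness of the replacement vertices underlying (1) is a secondary but indispensable bookkeeping step.
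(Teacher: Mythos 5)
Your parts (1) and (2) follow essentially the paper's route: your linear functional $f$ is exactly the paper's device of comparing against the shifted segments $\overline{PQ}$ and $\overline{PR}$ (with $P=\SEArrow{A}{1}$, $Q=\SEArrow{B}{1}$), and your column-$x_B+1$ analysis matches the role of $\SEArrow{B}{1}$. However, in (1) you explicitly defer the pairwise distinctness of the replacement vertices, and that is not optional bookkeeping you may grant yourself: the paper settles it by the column chain $x_{B_i'}\le x_{C_i}+1\le x_{B_j}\le x_{B_j'}-1$ for $i<j$, where the middle inequality requires the observation that the white vertex $C_i$, lying on the zigzag and on the tracing ray $\NEarrow{(\SEArrow{B_i}{1})}$, cannot pass the next black peak $B_j$ (the zigzag beyond $B_j$ lies strictly below that ray). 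Nothing in your text supplies this, and without it the count in (1) is unproved.

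More seriously, your part (3) proves the wrong statement. The printed hypothesis $x_{B_i}=2$ is a typo for $y_{B_i}=2$, as your own analysis inadvertently reveals by collapsing the configuration to $B_i=(2,2)$, $A=X$: the intended reading — a hole of height $2$, so $\SEArrow{B_i}{1}=A_{i+1}$, at an \emph{arbitrary} column — is what Figure~\ref{fig4} depicts, what the application in case (1.1.1) of Theorem~\ref{thm9} requires (there the holes $P$ satisfy the height condition $y_P=m-1$ in the mirrored top row, via Proposition~\ref{prop7}~(3), and sit at arbitrary columns $x_B<x_P<x_D+1$), and what the paper's own proof addresses: it explicitly treats black peaks $B_j$ with $x_{B_j}<x_{B_i}$, a case that is vacuous under your literal reading because there is no room left of column $2$. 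Consequently the substantive half of (3) is absent from your argument. For a peak $B_j$ strictly between $A$ and the hole, one must show that $C_j$ lies on the zigzag at a column with $x_A<x_{C_j}<x_{B_i}$ and is below $\NWarrow{(\SWArrow{B_i}{1})}$ (the tents between $A_{j+1}$ and $A_i$ are capped by the descending line into $A_i$, giving $y_{C_j}\le x_{B_i}-x_{C_j}$), hence $B_j'\in\North(C_j)$ is below $\NWarrow{B_i}$; and since every point of $\North(C_j)$ has height at least $2=y_{B_i}$ while the only candidate in column $x_{B_i}$ on that ray is the black vertex $B_i$ itself, $B_j'$ must lie in a column $\le x_{B_i}-1$, i.e., below $\NWarrow{(\NWArrow{B_i}{1})}$. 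Your ``rerun of (2) with $A_2$ in place of $A$'' does generalize to the peaks right of the hole (replace $A_2$ by $A_{i+1}$), but the left-hand case is precisely the new content of (3), and it is missing.
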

\begin{proof}
(1)
Suppose $B_i,B_j\in S_{\overline{XY}}(\overline{AB})$ are lattice points
that are neither white vertices nor holes.
It is sufficient to show that $B_i'$ is not in
$S_{\overline{XY}}(\overline{AB})\cap\mathcal{W}$,
and $B_i'\ne B_j'$ for $i\ne j$.
If $x_{B_i}+1>x_B$, then $B_i'=\SEArrow{B_i}{1}$ and
so $x_{B_i'}= x_{B_i}+1>x_B$.
Hence $B_i'$ is not in $S_{\overline{XY}}(\overline{AB})\cap\mathcal{W}$.
If $x_{B_i}+1\le x_B$,
then $B_i'\in \North(C_i)$ is either $\NEArrow{C_i}{1}$
or strictly above $\NEarrow{(\SEArrow{B_i}{1})}$,
we have $B_i'$ is not in $S_{\overline{XY}}(\overline{AB})\cap \mathcal{W}$
by the maximality of $d_i$.
If $i<j$, then $x_{B_i}+1\le x_{B_j}\le x_B$.
Since $x_{B_i'}\le x_{C_i}+1\le x_{B_j}\le x_{B_j'}-1$,
we have $B_i'\ne B_j'$.
Therefore,
$|W_{\overline{XY}}(\overline{AB})|
=[S_{\overline{XY}}(\overline{AB})]-|H_{\overline{XY}}(\overline{AB})|
=[\overline{AB}]-|H_{\overline{XY}}(\overline{AB})|$.

(2)
Let $P=\SEArrow{A}{1}$, $Q=\SEArrow{B}{1}$
and $R=\SWArrow{Q}{1}=\SouthArrow{B}{2}$.
It is clear that the vertices in $S_{\overline{XY}}(\overline{AB})$
are below $\overline{AB}$, and so are below $\overline{AB} \cup\SEarrow{B}$.
Suppose $B_i\in S_{\overline{XY}}(\overline{AB})$ is a lattice point
which is neither a white vertex nor a hole.
Then $B_i$ is below $\overline{AB}$ which implies $\SEArrow{B_i}{1}$ is below $\overline{PQ}$.
If $x_{B_i}+1>x_B$, then $B_i'=\SEArrow{B_i}{1}$
is below $\overline{PQ}$ and so is below $\overline{AB}\cup\SEarrow{B}$.
Otherwise, $x_{B_i}+1\le x_B$.
then $C_i$ is left to $\Southarrow{B}$ and below $\overline{PR}$.
Therefore, $B_{i}'\in \North(C_i)$ is below $\overline{ABQ}$
and so is below $\overline{AB}\cup\SEarrow{B}$.
See Figure \ref{fig3} for a demonstration.

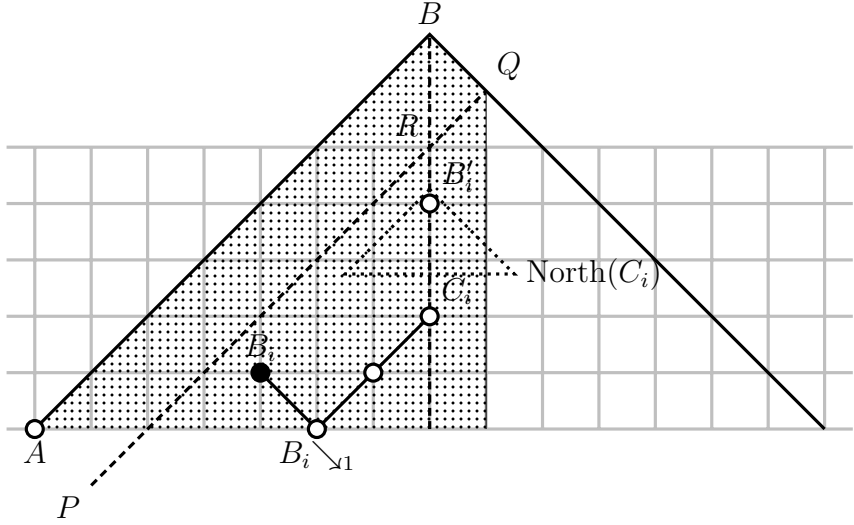
\begin{figure}[htb]
\centering
\begin{tikzpicture}[scale=3/4]
\draw[very thick,gray!50] (.5,1) grid (15.5,6);
\draw[pattern=dots] (1,1)--(8,8)--(9,7)--(9,1);
\draw[very thick] (1,1)--(8,8)--(15,1) (5,2)--(6,1)--(8,3);
\draw[very thick,densely dashed ] (8,1)--(8,8) (2,0)--(9,7);
\draw[very thick,fill=white]
(1,1) circle (.15) node [below] {$A$}
(6,1) circle (.15) node [below] {$\SEArrow{B_i}{1}$}
(7,2) circle (.15)
(8,3) circle (.15) node [above right] {$C_i$}
(8,5) circle (.15) node [above right] {$B_i'$};
\draw[very thick,fill]
(5,2) circle (.15) node [above] {$B_i$};
\draw
(8,8) node [above ] {$B$}
(2,0) node [below left] {$P$}
(9,7) node [above right] {$Q$}
(8,6) node [above left] {$R$};
\draw[very thick,dotted]
(8,5.25)--(6.5,3.75)--(9.5,3.75)--(8,5.25) (9.5,3.75) node [right] {$\North(C_i)$};
\end{tikzpicture}
\caption{Demonstration for the proof of case (2), Proposition \ref{prop6}.
The vertices in $W_{\overline{XY}}(\overline{AB})$ are in the shaded area.} \label{fig3}
\end{figure}

(3)
It is clear that the vertices in $S_{\overline{XY}}(\overline{AB})\cap \mathcal{W}$
are below $\NWarrow{(\NWArrow{B_i}{1})} \cup \NEarrow{(\SEArrow{B_i}{1})}$.
Suppose $B_j\in S_{\overline{XY}}(\overline{AB})$ is a lattice point
that is neither a white vertex nor a hole.
If $x_{B_j}>x_{B_i}$, then $B_j'$ is below $\NEarrow{(\SEArrow{B_i}{1})}$.
Otherwise, $x_{B_j}<x_{B_i}$ and so $x_{B_j}+1\le x_{B_i}\le x_B$.
Then $C_j\in S_{\overline{XY}}$ and $x_A<x_{C_j}<x_{B_i}$,
so $C_j$ is below $\NWarrow{(\SWArrow{B_i}{1})}$.
Hence $B_j'\in \North(C_j)$ is below $\NWarrow{B_i}$.
Since $B_j'\ne B_i$, we have $B_j'$ is below $\NWarrow{(\NWArrow{B_i}{1})}$.
See Figure \ref{fig4} for a demonstration.
\end{proof}

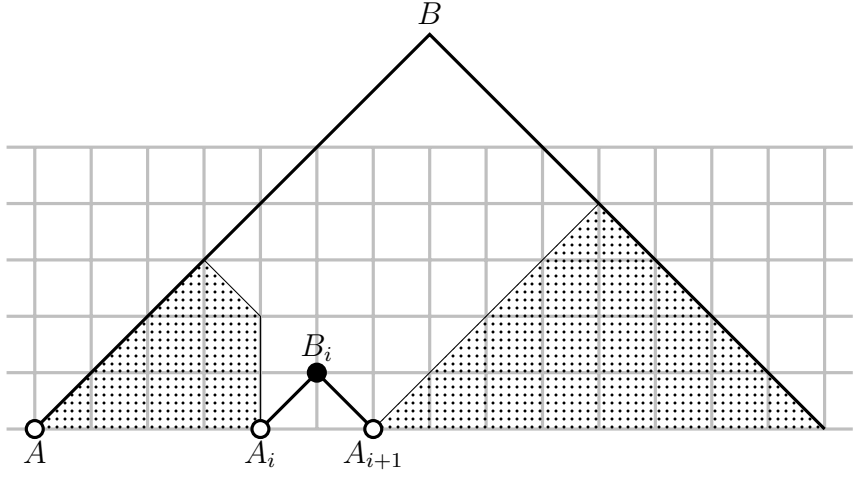
\begin{figure}[htb]
\centering
\begin{tikzpicture}[scale=3/4]
\draw[very thick,gray!50] (.5,1) grid (15.5,6);
\draw[pattern=dots] (1,1)--(4,4)--(5,3)--(5,1) (7,1)--(11,5)--(15,1);
\draw[very thick] (1,1)--(8,8)--(15,1) (5,1)--(6,2)--(7,1) ;
\draw[very thick,fill=white]
(1,1) circle (.15) node [below] {$A$}
(5,1) circle (.15) node [below] {$A_i$}
(7,1) circle (.15) node [below] {$A_{i+1}$};
\draw[very thick,fill]
(6,2) circle (.15) node [above] {$B_i$};
\draw
(8,8) node [above] {$B$};
\end{tikzpicture}
\caption{Demonstration for the proof of case (3), Proposition \ref{prop6}.
The vertices in $W_{\overline{XY}}(\overline{AB})$ are in the shaded area.} \label{fig4}
\end{figure}

\begin{proposition}\label{prop7}
If $A$ is a white vertex in $\overline{XY}$ and $B\in\NEarrow{A}$ with $x_B\le n$,
then the following properties of $H_{\overline{XY}}(\overline{AB})$ hold.
\begin{enumerate}
\item[(1)]
If $B_i$ is a hole, then $y_{B_i}\in\{2,m+1\}$ and $x_{B_i}\le n-m$.

\item[(2)]
If $B_i$ and $B_j$ are distinct holes, then $|x_{B_i}-x_{B_j}|\ge m+1$.

\item[(3)]
There is at most one hole $B_i$ with $x_A<x_{B_i}<x_A+m$,
and such a hole must satisfy $y_{B_i}=2$.

\item[(4)]
If $x_B<x_A+m+1$, then there is no hole.
\end{enumerate}
\end{proposition}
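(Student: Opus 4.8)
The plan is to establish part (1) first, since it is the structural heart, and then read off (2)--(4) by horizontal bookkeeping. Write $A_i,A_{i+1}$ for the two feet of the mountain whose peak is the hole $B_i$, so that $x_{A_i}=x_{B_i}-(y_{B_i}-1)$ and $x_{A_{i+1}}=x_{B_i}+(y_{B_i}-1)$; by the definition of a hole $y_{C_i}=m$, whence $d_i=m+1-y_{B_i}$ and $2\le y_{B_i}\le m+1$ by Proposition~\ref{prop5}. The decisive step is to place the ray $\NEarrow{(\SEArrow{B_i}{1})}$ relative to $S_{\overline{XY}}$. In the form $y=x+c$ this ray has $c=-x_{A_i}-1$, while the up-slope $\NEarrow{A_\ell}$ has the same slope and intercept $-x_{A_\ell}+1$, and every peak $B_\ell$ with $\ell\ge i+1$ sits on such an up-slope. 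Suppose toward a contradiction that $3\le y_{B_i}\le m$. Then $x_{A_{i+1}}-x_{A_i}=2(y_{B_i}-1)>2$, so $-x_{A_i}-1>-x_{A_\ell}+1$ for all $\ell\ge i+1$; hence the ray runs strictly above every up-slope, valley and peak to its right and never rejoins $S_{\overline{XY}}$, forcing $d_i=0$ and $y_{C_i}=y_{B_i}-1\le m-1$, contradicting $y_{C_i}=m$. Therefore $y_{B_i}\in\{2,m+1\}$, and when $y_{B_i}=2$ the ray coincides with $\NEarrow{A_{i+1}}$ (since then $c=-x_{A_{i+1}}+1$), along which $C_i$ is the highest white vertex, consistent with $y_{C_i}=m$.

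To finish (1), note that in both admissible cases $S_{\overline{XY}}$ has a genuine vertex of $G_{m,n}$ in column $x_{B_i}+m$: at height $m$ it is $C_i$ when $y_{B_i}=2$, and at height $1$ it is $A_{i+1}$ when $y_{B_i}=m+1$. Hence $x_{B_i}+m\le n$, i.e.\ $x_{B_i}\le n-m$. For (2) I will use the observation that a lattice peak of height $m+1$ is automatically a hole (it is not a vertex of $G_{m,n}$, and $\SEArrow{B_i}{1}$ is then a white top-row vertex), so between two consecutive holes no intermediate peak can reach height $m+1$. Now split on the admissible heights of $B_i,B_j$ with $x_{B_i}<x_{B_j}$, using $x_{A_{i+1}}\le x_{A_j}$ (a later mountain's left foot is not left of an earlier mountain's right foot) and the fact that a height-$2$ hole forces the next up-slope to climb to height $m$. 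The tall--tall and short--short combinations give gaps $\ge 2m$, the tall--short combination gives $\ge m+1$, and the short--tall combination attains the minimum $m+1$; hence $|x_{B_i}-x_{B_j}|\ge m+1$ in every case.

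Parts (3) and (4) are then quick. The key auxiliary fact is that no white vertex of $\overline{XY}$ lies strictly between two consecutive feet $A_\ell,A_{\ell+1}$; since $x_A\le x_{B_i}$ this forces $x_{A_i}\ge x_A$, so a hole with $y_{B_i}=m+1$ satisfies $x_{B_i}=x_{A_i}+m\ge x_A+m$. For (3): two holes in the open interval $(x_A,x_A+m)$ would lie at distance $<m<m+1$, contradicting (2), so there is at most one; and the displayed inequality rules out height $m+1$ for a hole in that interval, leaving height $2$. For (4): the hole definition gives $x_{B_i}+1\le x_B<x_A+m+1$, hence $x_{B_i}\le x_A+m-1$, which excludes the height-$(m+1)$ case, while a height-$2$ hole would need $C_i$ in column $x_{B_i}+m\le x_B<x_A+m+1$ together with $x_{B_i}\ge x_A+1$, again impossible; so no hole exists. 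The main obstacle is part (1): rigorously showing that for $y_{B_i}\ge 3$ the northeast ray escapes $S_{\overline{XY}}$ for good (so $d_i=0$), which is exactly what forces the admissible hole heights to be only $2$ and $m+1$. Once this is secured, (2)--(4) reduce to elementary interval arithmetic driven by (2) and the ``no foot strictly between consecutive feet'' principle.
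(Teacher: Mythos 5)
Your proposal is correct and takes essentially the same approach as the paper: the same reduction of hole heights to $\{2,m+1\}$ by showing $d_i=0$ when $3\le y_{B_i}\le m$ (your intercept comparison $c=-x_{A_i}-1$ versus $-x_{A_\ell}+1$ usefully spells out the step the paper leaves as the bare assertion ``$d=0$''), the same witness vertex in column $x_{B_i}+m$ giving $x_{B_i}\le n-m$, and the same interval arithmetic for (3) and (4). Your part (2) merely reorganizes the paper's argument---casing on both hole heights instead of only on $y_{B_i}$, where the paper obtains $x_{B_j}\ge x_{A_{i+1}}+1$ or $x_{B_j}\ge x_{C_i}+1$ directly---so the substance is identical.
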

\begin{proof}
(1)
Since $2\le y_{B_i}\le m+1$,
if $3\le  y_{B_i}\le m$, then we have $d=0$ and so $y_{C_i}=y_{B_i}-1\le m-1<m$,
a contradiction.
Hence $y_{B_i}\in\{2,m+1\}$.
If $y_{B_i}=m+1$, then $n\ge x_{A_{i+1}}=x_{B_i}+m$ which implies $x_{B_i}\le n-m$.
Otherwise, $y_{B_i}=2$ and $y_{C_i}=m$.
Hence $n\ge x_{C_i}\ge x_{B_i}+1+(m-1)$ which implies $x_{B_i}\le n-m$.

(2)
We may assume that $i<j$.
If $y_{B_i}=m+1$, then $x_{B_j}\ge x_{A_{i+1}}+1=x_{B_i}+m+1$.
Otherwise, $y_{B_i}=2$ and $y_{C_i}=m$.
Hence $x_{B_j}\ge x_{C_i}+1=x_{B_i}+m+1$.

(3)
The uniqueness follows from (2).
If $y_{B_i}=m+1$, then $x_{B_i}=x_{A_i}+m\ge x_A+m$, a contradiction.
Hence $y_{B_i}=2$.

(4)
Suppose to the contrary that there exists a hole $B_i$.
If $y_{B_i}=m+1$, then $d=0$ and
so $x_B\ge x_{C_i}=x_{B_i}+1=x_{A_i}+m+1\ge x_A+m+1$,
a contradiction.
Otherwise, $y_{B_i}=2$ and $y_{C_i}=m$,
then $x_B\ge x_{C_i}
= x_{A_{i+1}}+(m-1)= (x_{A_i}+2)+(m-1)\ge x_A+m+1$, a contradiction.
\end{proof}

\section{Exact value of $B(G_{m,n})$}\label{sec2}

Finally, we shall prove that the upper bound by Beaudouin-Lafon et al.~\cite{2020bccknt}
is in fact the exact value of $B(G_{m,n})$ with $n\ge m\ge 2$.
First, a useful lemma.

\begin{lemma}\label{lem8}
For $n\ge m\ge 2$, suppose $n-m=q(m+1)-r$,
where $q$ and $r$ are integers with $0\le r\le m$.
If there exist integers $a,b,c\ge 0$ such that $n-m=a(m-1)+bm+c(m+1)$,
then $c\le q-\lceil r/2\rceil$ and so $r\le 2q$.
\end{lemma}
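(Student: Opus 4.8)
The plan is to collapse the two given representations of $n-m$ into one linear identity and then read off a lower bound on a single auxiliary quantity, using the nonnegativity of $a,b,c$ together with the constraint $0\le r\le m$.

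First I would equate the two expressions for $n-m$, giving $q(m+1)-r=a(m-1)+bm+c(m+1)$, and cancel $c(m+1)$ against part of $q(m+1)$. Setting $p=q-c$, this produces the key identity
$$p(m+1)=a(m-1)+bm+r.$$
Because $c=q-p$, the target inequality $c\le q-\lceil r/2\rceil$ is equivalent to $p\ge\lceil r/2\rceil$; and once that holds, the claim $r\le 2q$ is immediate, since $c\ge 0$ forces $\lceil r/2\rceil\le q$ and hence $r\le 2\lceil r/2\rceil\le 2q$. Thus the whole statement reduces to proving the single bound $p\ge\lceil r/2\rceil$.

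To obtain that bound I would introduce the integer $k=a+b-p$ and solve the key identity for $a$ and $b$, obtaining $a=km-p+r$ and $b=2p-r-k(m-1)$ (with $a+b=p+k$). The nonnegativity of $a$ and $b$ then becomes two transparent inequalities. If $k\ge 0$, then $b\ge 0$ gives $2p\ge r+k(m-1)\ge r$, and since $p$ is an integer this is exactly $p\ge\lceil r/2\rceil$. The case $k\le -1$ is impossible: from $a\ge 0$ we get $p\le r+km\le r-m\le 0$ (using $r\le m$), while $a+b=p+k\ge 0$ forces $p\ge -k\ge 1$, a contradiction. Hence $k\ge 0$ always, and the bound follows.

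I expect the only real obstacle to be finding the correct change of variables. The statement concerns one \emph{given} representation rather than an extremal one, so a greedy ``use as many $(m+1)$'s as possible'' argument does not directly apply; instead the identity $p(m+1)=a(m-1)+bm+r$ has to be reorganized until the parameter $k=a+b-p$ exposes that $k<0$ clashes with $r\le m$. After that everything is routine integer arithmetic, the one subtlety being to invoke $p\in\mathbb{Z}$ when passing from $2p\ge r$ to $p\ge\lceil r/2\rceil$.
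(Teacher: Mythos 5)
Your proof is correct and, once the change of variables is unwound, it is essentially the paper's own argument: your $k=a+b-p$ equals $s-q$ for the paper's auxiliary quantity $s=a+b+c$, your main inequality $2p\ge r+k(m-1)$ (from $b\ge 0$) is exactly the paper's bound $c\le s-\frac{s(m+1)-(n-m)}{2}$ derived from $a+b\ge\frac{2a+b}{2}$ (which is equivalent to $b\ge 0$), and your exclusion of $k\le -1$ corresponds to the paper's step $s\ge\lceil\frac{n-m}{m+1}\rceil=q$ obtained from $2a+b\ge 0$. Both proofs then finish with the same appeal to integrality to pass from $2p\ge r$ (i.e.\ $c\le q-r/2$) to $p\ge\lceil r/2\rceil$.
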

\begin{proof}
Let $s=a+b+c$.
Since $a+b\ge \frac{2a+b}{2}=\frac{s(m+1)-(n-m)}{2}$,
we have $c\le s-\frac{s(m+1)-(n-m)}{2}$,
which is decreasing with respect to $s$.
Since $s(m+1)-(n-m)=2a+b\ge 0$,
we have $s\ge\lceil\frac{n-m}{m+1}\rceil=q$
and so $c\le q-\frac{q(m+1)-(n-m)}{2}=q-{r}/{2}$.
Since $c$ and $q$ are integers,
we have $c\le q-\lceil r/2\rceil$.
\end{proof}

\begin{theorem}\label{thm9}
For the grid graph $G_{m,n}$ with $n\ge m\ge 2$,
if $n-m=q(m+1)-r$, where $q$ and $r$ are integers with $0\le r\le m$,
then $B(G_{m,n})=n-q+\lceil r/2\rceil$ for $r\le 2q$
and $B(G_{m,n})=n-q+m-1$ for $r>2q$.
\end{theorem}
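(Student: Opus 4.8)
The plan is to combine Theorem~\ref{upper} with a matching lower bound. A direct computation shows that the two upper bounds of Theorem~\ref{upper} are exactly the claimed values: using $m(q+1)-\lfloor r/2\rfloor = n-q+\lceil r/2\rceil$ for $r\le 2q$, and $\lfloor\frac{n-m}{m+1}\rfloor = q-1$ (valid since $r>2q\ge 0$ forces $r>0$) so that $n+m-\lfloor\frac{n-m}{m+1}\rfloor-2 = n-q+m-1$ for $r>2q$. Hence it suffices to prove the matching lower bound, which I will do by showing that a minimum zero blocking set $\mathcal W$ satisfies $|\mathcal W|\ge n-q+\lceil r/2\rceil$ when $r\le 2q$ and $|\mathcal W|\ge n-q+m-1$ when $r>2q$.

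For the lower bound I would work with the bottom boundary curve $S_{\overline{XY}}$ and its white-vertex set $W_{\overline{XY}}$. Reading the curve across all $n$ columns and applying Proposition~\ref{prop6}(1) (together with the white vertices of the boundary descent $\overline{B_0A_1}$, which cover the leftmost columns) gives the basic estimate $|\mathcal W|\ge|W_{\overline{XY}}| = n-h$, where $h=|H_{\overline{XY}}|$ counts the holes. The whole problem thus reduces to bounding $h$ from above. The idea is to read off from the geometry of $S_{\overline{XY}}$ a decomposition $n-m = a(m-1)+bm+c(m+1)$ with integers $a,b,c\ge 0$ in which the width-$(m+1)$ summands are in bijection with the holes, so that $c=h$: the diagonal runs of the zig-zag are grouped into blocks of horizontal width $m-1$, $m$, or $m+1$, where by Proposition~\ref{prop7} each hole forces one width-$(m+1)$ block (an ascent from a bottom touch to a top touch, using that consecutive holes are at least $m+1$ apart and occupy columns $\le n-m$), while the two flexible boundary pieces $\overline{B_0A_1}$ and $\overline{A_kB_k}$ absorb the excess so that the interior widths sum to exactly $n-m$. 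Once this decomposition is established, Lemma~\ref{lem8} gives $h=c\le q-\lceil r/2\rceil$, and therefore $|\mathcal W|\ge n-q+\lceil r/2\rceil$, settling the case $r\le 2q$.

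For the case $r>2q$, Lemma~\ref{lem8} tells us that $n-m$ admits no decomposition of the above form at all, so the clean block-tiling cannot exist. The plan is to exploit this obstruction: the blocks of widths $m-1$, $m$, $m+1$ cannot tile width $n-m$ exactly, so the curve is forced to be inefficient, and I would convert this impossibility into a guarantee of extra white vertices beyond the $n$ coming from column coverage. Concretely, I would track the rows left uncovered by the low portions of the curve and invoke the consecutive-rows condition of Lemma~\ref{lem3} (equivalently, analyse the tall diagonal stacks that must reach the top, paired with the opposite curve $S_{\overline{ZW}}$), aiming to show that the unavoidable surplus is exactly $m-1-q$, which yields $|\mathcal W|\ge n+(m-1-q) = n-q+m-1$.

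I expect the decomposition step to be the crux, in two respects. First, turning the picture of the zig-zag curve into the precise identity $n-m=a(m-1)+bm+c(m+1)$ with $c=h$ --- correctly accounting for the two boundary pieces, the two hole types $y_{B_i}\in\{2,m+1\}$ from Proposition~\ref{prop7}(1), and the possible hole within the first $m$ columns from Proposition~\ref{prop7}(3) --- is where essentially all of the geometric bookkeeping resides. Second, the case $r>2q$ is genuinely different in flavour: there the combinatorial non-representability must be upgraded into a numerical lower bound, and producing the extra $m-1-q$ white vertices (rather than merely arguing $h=0$, which only gives the too-weak bound $|\mathcal W|\ge n$) is the most delicate part of the argument.
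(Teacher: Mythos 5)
Your reduction to the lower bound and the arithmetic identifying the two bounds of Theorem~\ref{upper} with the claimed exact values are correct and match the paper. The gap is in your lower-bound mechanism for $r\le 2q$. You rest everything on the claim that the zig-zag $S_{\overline{XY}}$ decomposes into blocks of widths $m-1$, $m$, $m+1$ with the holes in bijection with the width-$(m+1)$ blocks, so that $h=c$ and Lemma~\ref{lem8} yields $h\le q-\lceil r/2\rceil$. But no such decomposition exists for an arbitrary minimum zero blocking set: the gaps $x_{A_{i+1}}-x_{A_i}$ can be anything in $\{1,\ldots,2m\}$ (Proposition~\ref{prop5} only caps them at $2m$), and the widths are pinned to $\{m-1,m,m+1\}$ only under the structural hypothesis that no white vertex of $\overline{ZW}$ lies strictly above $\NWarrow{A_i}\cup\NEarrow{A_i}$ (and symmetrically) --- this is precisely the paper's case (1.2), where the forced alignments $K=A_i$ or $L=A_{i+1}$ produce $a(m-1)+bm+c(m+1)=n-m$ and the count $n-c$. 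When instead a top white vertex $B$ sits close above a bottom white vertex $A$ (i.e.\ $|x_A-x_B|<m-1$, the paper's case (1.1)), your bijection breaks down and the estimate $|\mathcal{W}|\ge n-h$ with $h\le q-\lceil r/2\rceil$ is simply not available; the paper handles this by counting white vertices along \emph{both} curves simultaneously, using the disjointness statements of Proposition~\ref{prop6}(2),(3), the $\overline{AD^{\pm}}$ adjustment for a possible nearby hole, and the hole-spacing of Proposition~\ref{prop7}, to obtain $|\mathcal{W}|\ge n+m-\lfloor\frac{n-m}{m+1}\rfloor-2$ (with $n-m+1$ variants, split on whether $X$ is black or white, when the configuration spills over the left or right boundary).

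Your plan for $r>2q$ is a hope rather than a proof, and it also misreads where that bound comes from: in the paper, the non-representability given by Lemma~\ref{lem8} is used only \emph{negatively}, to conclude that case (1.2) cannot occur when $r>2q$; the bound $n-q+m-1$ is then exactly the two-curve count from case (1.1), which holds for all $r$ --- no ``surplus of $m-1-q$ extracted from the tiling obstruction'' is ever established or needed. You give no mechanism for producing those $m-1-q$ extra white vertices, and the row-coverage idea via Lemma~\ref{lem3} that you sketch would at best recover one ingredient of the two-curve count. Finally, your blanket estimate $|\mathcal{W}|\ge n-h$ silently assumes white vertices on the bottom row off the corners; the degenerate configurations in which the only white vertices are among $X,Y,Z,W$ (which force $n\in\{m,m+1,m+2\}$ and give the bounds $n$ and $2m$, the paper's case (2)) require the separate treatment the paper gives them.
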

\begin{proof}
First, we prove that the upper bound on $B(G_{m,n})$
in Theorem \ref{upper} is the same as the exact value in this theorem.
Notice that $q=\lceil\frac{n-m}{m+1}\rceil$ is a non-negative integer.
Hence the condition $\lceil\frac{n-m}{m+1}\rceil\le\lfloor\frac{n-m}{m-1}\rfloor$ is
the same as $q\le\frac{n-m}{m-1}$, or equivalently $r=q(m+1)-(n-m)\le 2q$.
Also $n-q+\lceil r/2\rceil = m+qm-r+\lceil r/2\rceil = m(q+1)-\lfloor r/2\rfloor$.
On the other hand, the condition
$\lceil\frac{n-m}{m+1}\rceil > \lfloor\frac{n-m}{m-1}\rfloor$ is equivalent to $r>2q$.
In this case, $r>0$ and so $q=\lceil\frac{n-m}{m+1}\rceil=\lfloor\frac{n-m}{m+1}\rfloor+1$.
Also $n-q+m-1=n+m-\lfloor\frac{n-m}{m+1}\rfloor-2$.
Therefore, it is sufficient to prove the lower bound.
We consider the following cases.

(1)
Consider the case when there exists a white vertex
$A\in(\overline{XY}\cup\overline{ZW})\setminus\{X,Y,Z,W\}$,
say $A\in\overline{XY}\setminus\{X,Y\}$.

(1.1)
Consider the subcase when there exists a white vertex $B\in\overline{ZW}$
strictly above $\NWarrow{A}\cup\NEarrow{A}$, that is, $|x_A-x_B|<m-1$.
In this subcase, let
$C=\NWarrow{A}\cap\SWarrow{B}$, $D=\NEarrow{A}\cap\SEarrow{B}$,
$E=\SWarrow{B}\cap\Southarrow{Z}$ and $F=\SEarrow{B}\cap\Southarrow{W}$.
See Figure \ref{fig5} for a demonstration.

\begin{figure}[htb]
\centering
\begin{tikzpicture}[scale=3/4]
\draw[very thick,gray!50] (1,1) grid (11,6);
\draw (1,1) node [below left] {$X$} (11,1) node [below right] {$Y$}
      (1,6) node [above left] {$Z$} (11,6) node [above right] {$W$};
\draw[pattern=dots] (5,6)--(11,0)--(11,6) (7,1)--(8,2)--(9,1);
\draw[very thick] (3.5,4.5)--(7,1)--(8.5,2.5) (1,2)--(5,6)--(11,0) ;
\draw[very thick,dotted] (11,1) --(11,-.5);
\draw[very thick,fill=white]
(7,1) circle (.15) node [below] {$A$}
(5,6) circle (.15) node [above] {$B$};
\draw
(3.5,4.5) node [above left] {$C$}
(8.5,2.5) node [left] {$D$}
(8,2) node [left] {$D^-$}
(1,2) node [below left] {$E$}
(11,0) node [below right] {$F$};
\end{tikzpicture}
\caption{Vertices/points $A, B, C, D, E, F$ in case (1.1), Theorem \ref{thm9}.
The vertices in $W_{\overline{XY}}(\overline{AD^*})$ and in $W_{\overline{XY}}(\overline{BF^*})$
are in the disjoint shaded area when $\overline{AD^*}=\overline{AD^-}$.} \label{fig5}
\end{figure}
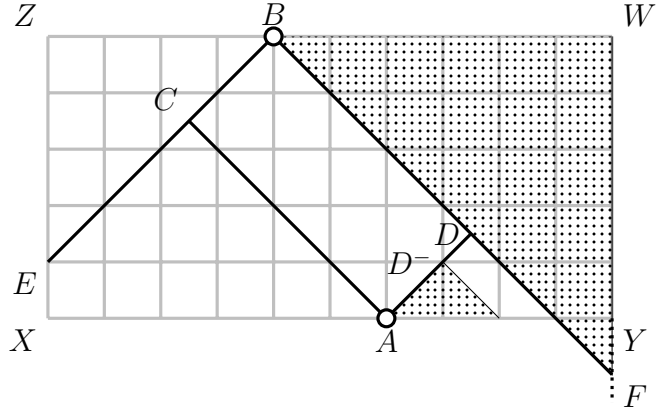

(1.1.1)
If $x_{C}\ge 1$ and $x_{D}\le n$, then by Proposition \ref{prop7} (3),
there exists at most one hole $P$ of $S_{\overline{ZW}}(\overline{BF})$
with $x_{B}<x_{P}<x_{D}+1\le x_{B}+m$, and such $P$ must satisfy $y_{P}=m-1$.
Let $\overline{AD^*}=\overline{AD^+}$ if $P$ exists
and $\overline{AD^*}=\overline{AD^-}$ for otherwise.
Then by Proposition \ref{prop6} (2) and (3),
$W_{\overline{XY}}(\overline{AD^*})$ and $W_{\overline{ZW}}(\overline{BF})$ are disjoint.
Similar property holds for $\overline{AC^*}$.
By Proposition \ref{prop7} (4), there exists no hole of
$S_{\overline{XY}}(\overline{AC^*})$ and $S_{\overline{XY}}(\overline{AD^*})$.
Since $[\overline{AD^+}]=[\overline{AD^-}]+1$, we have
$[\overline{AD^*}]=[\overline{AD^-}]+|\{P\in H_{\overline{ZW}}(\overline{BF}):x_B<x_P<x_D+1\}|$.
Now consider the set
$W_{\overline{XY}}(\overline{AD^*})\cup W_{\overline{XY}}(\overline{AC^*})
\cup W_{\overline{ZW}}(\overline{BF})\cup W_{\overline{ZW}}(\overline{BE})$.
By Proposition \ref{prop6} (1), there exist at least
$[\overline{AD^-}]+[\overline{AC^-}]+[\overline{BF}]+[\overline{BE}]-[\{A,B\}]-
|\{P\in H_{\overline{ZW}}(\overline{BF}): x_P\ge x_D+1\}|-
|\{Q\in H_{\overline{ZW}}(\overline{BE}): x_Q\le x_C-1\}|$ white vertices.
Since $A$ and $B$ are lattice points, we have
$[\overline{AD^-}]+[\overline{AC^-}]=[\overline{BC^-}]+[\overline{AC^-}]
=[\overline{BCA}]-[\{C\}]\ge m-1$
and $[\overline{BF}]+[\overline{BE}]=[\overline{FBE}]+[\{B\}]=n+1$.
For a hole $P\in H_{\overline{ZW}}(\overline{BF})$ with $x_P\ge x_D+1\ge x_C+m\ge 1+m$
and a hole $Q\in H_{\overline{ZW}}(\overline{BE})$ with $x_Q\le x_C-1\le x_D-m\le n-m$,
we have $|x_P-x_Q|\ge (x_D+1)-(x_C-1)=m+1$.
Hence, by Proposition \ref{prop7} (1) and (2),
there exist at most
$\lfloor \frac{(n-m)-(1+m)}{m+1}\rfloor +1=\lfloor\frac{n-m}{m+1}\rfloor$
such holes in total.
Therefore, there exist at least $(m-1)+(n+1)-2-\lfloor\frac{n-m}{m+1}\rfloor
=n+m-\lfloor\frac{n-m}{m+1}\rfloor-2$ white vertices.
See Figure \ref{fig5} and \ref{fig6} for a demonstration.

\begin{figure}[htb]
\centering
\begin{tikzpicture}[scale=3/4]
\draw[very thick,gray!50] (1,1) grid (11,6);
\draw (1,1) node [below left] {$X$} (11,1) node [below right] {$Y$}
      (1,6) node [above left] {$Z$} (11,6) node [above right] {$W$};
\draw[pattern=dots] (7,1)--(9,3)--(11,1) (7,6)--(11,2)--(11,6);
\draw[very thick] (3.5,4.5)--(7,1)--(8.5,2.5) (1,2)--(5,6)--(11,0);
\draw[very thick,dotted] (11,1) --(11,-.5);
\draw[very thick,fill=white]
(7,1) circle (.15) node [below] {$A$}
(5,6) circle (.15) node [above] {$B$};
\draw[very thick,fill]
(6,5) circle (.15) node [above] {$P$};
\draw
(3.5,4.5) node [above left] {$C$}
(8.5,2.5) node [left] {$D$}
(9,3) node [above] {$D^+$}
(1,2) node [below left] {$E$}
(11,0) node [below right] {$F$};
\end{tikzpicture}
\begin{tikzpicture}[scale=3/4]
\draw[very thick,gray!50] (1,1) grid (11,6);
\draw (1,1) node [below left] {$X$} (11,1) node [below right] {$Y$}
      (1,6) node [above left] {$Z$} (11,6) node [above right] {$W$};
\draw[pattern= dots] (7,1)--(9,3)--(11,1) (11,5)--(10,6)--(11,6) (7.5,3.5)--(8,4)--(8,6)--(5,6);
\draw[very thick] (3.5,4.5)--(7,1)--(8.5,2.5) (1,2)--(5,6)--(11,0);
\draw[very thick,dotted] (11,1)--(11,-.5);
\draw[very thick,fill=white]
(7,1) circle (.15) node [below] {$A$}
(5,6) circle (.15) node [above] {$B$};
\draw[very thick,fill]
(9,5) circle (.15) node [above] {$P$};
\draw
(3.5,4.5) node [above left] {$C$}
(8.5,2.5) node [left] {$D$}
(9,3) node [above] {$D^+$}
(1,2) node [below left] {$E$}
(11,0) node [below right] {$F$};
\end{tikzpicture}
\caption{Demonstration for the proof of case (1.1.1), Theorem \ref{thm9}.
The vertices in $W_{\overline{XY}}(\overline{AD^*})$
and in $W_{\overline{XY}}(\overline{BF^*})$
are in the disjoint shaded area when $\overline{AD^*}=\overline{AD^+}$.} \label{fig6}
\end{figure}
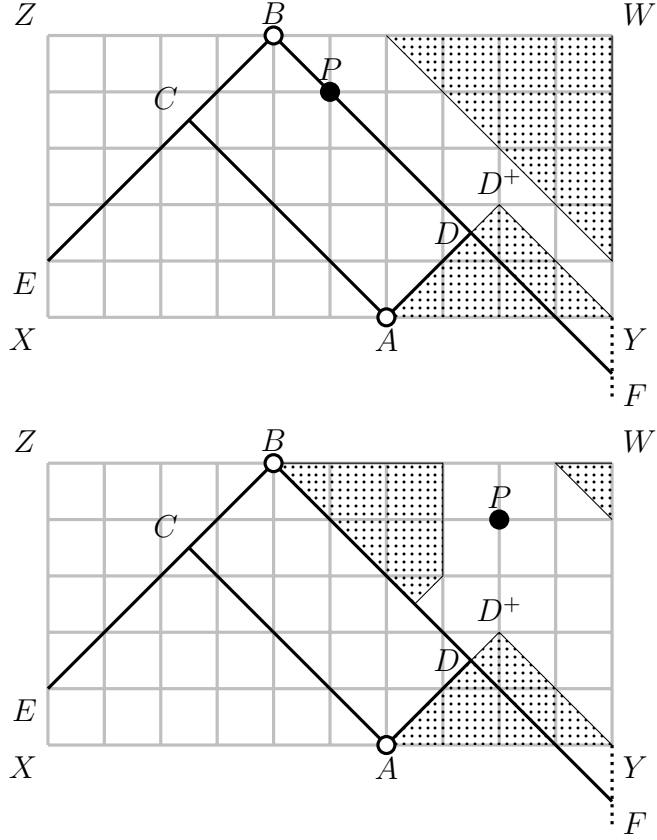

(1.1.2)
If $x_{C}<1$ or $x_{D}>n$, say $x_{C}<1$,
then $x_{D}=x_{C}+m-1<m\le n$.
Similar to Case (1.1.1),
there exists at most one hole $P$ of $S_{\overline{ZW}}(\overline{BF})$
with $x_{B}<x_{P}<m<x_{B}+m$,
and we define $\overline{AD^*}$ by the existence of such $P$.
Now we consider two configurations as follows.

(1.1.2.1)
If $X$ is black,
since $x_B\leq m$, every $A_i$ left to $A$ has the same property as $A$,
then we choose $A=A_1$ so that $B_0=\overline{AC} \cap \overline{XZ}$ is white.
Let $G=\NEarrow{B_0} \cap \SEarrow{E}$.
Now we consider the set
$W_{\overline{XY}}(\overline{AD^*})\cup W_{\overline{XY}}(\overline{AB_0^-})
\cup W_{\overline{XZ}}(\overline{B_0G^-})
\cup W_{\overline{ZW}}(\overline{BF})\cup W_{\overline{ZW}}(\overline{BE})$.
By Proposition \ref{prop6} (1), there exist at least
$[\overline{AD^-}]+[\overline{AB_0^-}]+[\overline{B_0G^-}]+[\overline{BF}]+[\overline{BE}]
-[\{A,B\}]-|\{P\in H_{\overline{ZW}}(\overline{BF}): x_P\ge m\}|$
white vertices.
Since $A,B$ and $B_0$ are lattice points,
we have $[\overline{AD^-}]+[\overline{AB_0^-}]+[\overline{B_0G^-}]
=[\overline{BC^-}]+[\overline{AC^-}]=[\overline{BCA}]-[\{C\}]\ge m-1$
and $[\overline{BF}]+[\overline{BE}]=[\overline{FBE}]+[\{B\}]=n+1$.
By Proposition \ref{prop7} (1) and (2), we have
$|\{P\in H_{\overline{ZW}}(\overline{BF}):x_P\ge m\}|
\le\lfloor\frac{(n-m)-m}{m+1}\rfloor+1=\lfloor\frac{n-m+1}{m+1}\rfloor$.
Therefore, there exist at least $(m-1)+(n+1)-2-\lfloor\frac{n-m+1}{m+1}\rfloor
=n+m-\lfloor\frac{n-m+1}{m+1}\rfloor-2$ white vertices.
See Figure \ref{fig7} for a demonstration.

\begin{figure}[htb]
\centering
\begin{tikzpicture}[scale=3/4]
\draw[very thick,gray!50] (4,1) grid (11,6);
\draw (4,1) node [below left] {$X$} (11,1) node [below right] {$Y$}
      (4,6) node [above left] {$Z$} (11,6) node [above right] {$W$};
\draw[pattern=dots] (11,6)--(11,0)--(5,6)--(4,5)--(4,6) (9,1)--(7.5,2.5)--(6,1)--(4.5,2.5)--(3,1) (4,3)--(4.5,3.5)--(4,4);
\draw[very thick] (11,0)--(5,6)--(3,4)--(6,1)--(8,3) (4,3)--(5,4)--(4,5);
\draw[very thick,dotted] (11,1) --(11,-.5);
\draw[very thick,fill=white]
(6,1) circle (.15) node [below] {$A=A_1$}
(5,6) circle (.15) node [above] {$B$}
(4,3) circle (.15) node [left] {$B_0$};
\draw
(8,3) node [left] {$D$}
(3,4) node [left] {$C$}
(4,5) node [left] {$E$}
(11,0) node [below right] {$F$}
(5,4)  node [right] {$G$};
\end{tikzpicture}
\caption{Demonstration for the proof of case (1.1.2.1), Theorem \ref{thm9}.} \label{fig7}
\end{figure}

(1.1.2.2)
If $X$ is white, then let
$\NEarrow{X}$ intersect $\overline{AC}$, $\SEarrow{E}$ and $\overline{BD}$ at $H$, $I$ and $J$, respectively.
Now we consider the set
$W_{\overline{XY}}(\overline{AD^*})\cup W_{\overline{XY}}(\overline{AH^-})
\cup W_{\overline{XZ}}(\overline{XI^-})
\cup W_{\overline{ZW}}(\overline{BF})\cup W_{\overline{ZW}}(\overline{BE})$.
By Proposition \ref{prop6} (1), there exist at least
$[\overline{AD^-}]+[\overline{AH^-}]+[\overline{XI^-}]+[\overline{BF}]+[\overline{BE}]-[\{A,B\}]-
|\{P\in H_{\overline{ZW}}(\overline{BF}): x_P\ge m\}|$
white vertices.
Since $A,X,E$ and $B$ are lattice points,
we have $[\overline{AH^-}] \ge [\overline{DJ^-}]$ and
$[\overline{XI^-}]=[\overline{EI^-}]=[\overline{BJ^-}]$.
Then $[\overline{AD^-}]+[\overline{AH^-}]+[\overline{XI^-}] \ge [\overline{BDA}] - [\{J\}]\ge m-1$
and $[\overline{BF}]+[\overline{BE}]=[\overline{FBE}]+[\{B\}]=n+1$.
By Proposition \ref{prop7} (1) and (2), we have
$|\{P\in H_{\overline{ZW}}(\overline{BF}): x_P\ge m\}|\le\frac{(n-m)-m}{m+1}+1
=\frac{n-m+1}{m+1}$.
Therefore, there exist at least $(m-1)+(n+1)-2-\lfloor\frac{n-m+1}{m+1}\rfloor
=n+m-\lfloor\frac{n-m+1}{m+1}\rfloor-2$ white vertices.
See Figure \ref{fig8} for a demonstration.

\begin{figure}[htb]
\centering
\begin{tikzpicture}[scale=3/4]
\draw[very thick,gray!50] (4,1) grid (11,6);
\draw (4,1) node [below] {$X=A_1$} (11,1) node [below right] {$Y$}
      (4,6) node [above left] {$Z$} (11,6) node [above right] {$W$};
\draw[pattern=dots] (11,6)--(11,0)--(5,6)--(4,5)--(4,6) (9,1)--(7.5,2.5)--(6,1)--(5.5,1.5)--(5,1) (4,1)--(5.5,2.5)--(4,4);
\draw[very thick] (11,0)--(5,6)--(3,4)--(6,1)--(8,3) (4,1)--(7,4) (6,3)--(4,5);
\draw[very thick,dotted] (11,1)--(11,-.5);
\draw[very thick,fill=white]
(6,1) circle (.15) node [below] {$A$}
(5,6) circle (.15) node [above] {$B$}
(4,1) circle (.15);
\draw
(3,4) node [left] {$C$}
(8,3) node [left] {$D$}
(4,5) node [left] {$E$}
(11,0) node [below right] {$F$}
(5,2)  node [right] {$H$}
(6,3)  node [right] {$I$}
(7,4)  node [below] {$J$};
\end{tikzpicture}
\caption{Demonstration for the proof of case (1.1.2.2), Theorem \ref{thm9}.} \label{fig8}
\end{figure}

(1.2)
Consider the subcase that
for each $A_i\in\overline{XY}\setminus\{X,Y\}$,
there exists no white vertex in $\overline{ZW}$
strictly above $\NWarrow{A_i}\cup \NEarrow{A_i}$, and
for all white vertices $A\in\overline{ZW}\setminus\{Z,W\}$,
there exists no white vertex in $\overline{XY}$
strictly below $\SWarrow{A}\cup \SEarrow{A}$.
Let $K=\SWarrow{A}\cap \Westarrow{Y}$ and $L=\SEarrow{A}\cap \Eastarrow{X}$.
Since $B_0\in\overline{XZ}$ is strictly left to $\overline{AL}$,
we have $A_1\in \SEarrow{B_0}$ is strictly left to $L$, and hence left to $K$.
Similarly, $A_k$ is right to $L$.
Hence, $A$ is below $\overline{A_iB_iA_{i+1}}$ for some $1\leq i\leq k-1$,
and by Proposition \ref{prop4},
the vertices strictly below $\overline{A_iB_iA_{i+1}}$ are black,
which implies $K=A_i$ or $L=A_{i+1}$.
If $K=A_i$, then we have $x_A-x_{A_i}=m-1$
and, since $m =y_A \le y_{B_i} \le m+1$ by Proposition \ref{prop5},
$x_{A_{i+1}}-x_{A}
=(x_{A_{i+1}}-x_{B_i})+(x_{B_i}-x_{A})
=(y_{B_i}-y_{A_{i+1}})+(y_{B_i}-y_{A})
=2y_{B_i}-1-m\in\{m-1,m,m+1\}$;
in symmetry, $x_A-x_{A_i}\in\{m-1,m,m+1\}$ and $x_{A_{i+1}}-x_A=m-1$ when $L=A_{k+1}$.
Then, we repeat the process by taking place of $A$ with $A_i$ and $A_{i+1}$,
and it stops at corner vertices
and produces integers $a,b,c\ge 0$ with $(a+1)(m-1)+bm+c(m+1)=n-1$,
that is, $a(m-1)+bm+c(m+1)=n-m$,
which implies that there exist at least $n-c$ white vertices.

(2)
If there exists no white vertex other than $X,Y,Z$ and $W$,
then by Lemma \ref{lem3}, we have $X$ or $Y$ is white, say $X$ is white.

(2.1)
If $n=m$, then $W_{\overline{XY}}(\overline{XW})$ has at least $n$ white vertices.

(2.2)
If $n>m$, then $Y$ is white. Otherwise
$y_{B_{1}}=n> m$, a contradiction.
Similarly, $Z$ and $W$ are white.
Meanwhile, we have $n\in\{m+1,m+2\}$. Otherwise, by Proposition \ref{prop4},
the vertices strictly below $S_{\overline{XY}}$
and the vertices strictly above $S_{\overline{ZW}}$
produce a pair of consecutive columns of black vertices,
which contradicts to Lemma \ref{lem3}.
Also, $S_{\overline{XY}}$ and $S_{\overline{ZW}}$ contain at least $2m$ white vertices.

Having all cases analyzed, we now evaluate the lower bound.
\begin{itemize}
\item
In case (1.1.1), we have
$|\mathcal{W}|\ge n+m-\lfloor\frac{n-m}{m+1}\rfloor-2$.
If $r=0$, then $r\le 2q$ and
$n+m-\lfloor\frac{n-m}{m+1}\rfloor-2=n-q+m-2\ge n-q+\lceil r/2\rceil$.
If $r>0$, then
$n+m-\lfloor\frac{n-m}{m+1}\rfloor-2=n-(q-1)+m-2=n-q+m-1
\ge n-q+\lceil r/2\rceil$.
\item
In both cases (1.1.2.1) and (1.1.2.2), we have $m\ge 3$ and
$|\mathcal{W}|\ge n+m-\lfloor\frac{n-m+1}{m+1}\rfloor-2$.
If $r\le 1$, then $r\le 2q$ and
$n+m-\lfloor\frac{n-m+1}{m+1}\rfloor-2=n-q+m-2
\ge n-q+\lceil r/2\rceil$.
If $r>1$, then
$n+m-\lfloor\frac{n-m+1}{m+1}\rfloor-2=n-(q-1)+m-2=n-q+m-1
\ge n-q+\lceil r/2\rceil$.
\item
In case (1.2), by Lemma \ref{lem8}, we have $r\le 2q$
and $|\mathcal{W}|\ge n-c\ge n-q+\lceil r/2\rceil$.
\item
In case (2.1), we have $r=q=0\le 2q$ and $|{\cal W}|\ge n=n-q+\lceil r/2\rceil$.
\item
In case (2.2), we have $n\in\{m+1,m+2\}$ and $|\mathcal{W}|\ge 2m$.
So $q=1$ and $2m\ge n-q+m-1\ge n-q+\lceil r/2\rceil$.
\end{itemize}

Therefore, we have
$|{\cal W}|\ge n-q+\lceil r/2\rceil$ for $r\le 2q$
and $|{\cal W}|\ge n-q+m-1$ for $r> 2q$,
and the proof for the theorem then completes.
\end{proof}


\begin{thebibliography}{99}
\bibitem{2008a}
A. Aazami,
Hardness results and approximation algorithms for some problems on graphs,
Ph.D. thesis, University of Waterloo, 2008.

\bibitem{2024agm}
F. Afzali, A.H. Ghodrati and H.R. Maimani,
Failed zero forcing numbers of Kneser graphs, Johnson graphs, and hypercubes,
J. Applied Math. Comput. 70 (2024) 2665--2675.

\bibitem{2008AIM}
AIM Minimum Rank --- Special Work Group,
Zero forcing sets and the minimum rank of graphs,
Linear Algebra Appl. 428 (2008) 1628--1648.

\bibitem{2020apnaA}
P.A.B. Pelayo and M.N.M. Abara,
Maximal failed zero forcing sets for products of two graphs (2022).
arXiv: 2202.04997

\bibitem{2020apnaB}
P.A.B. Pelayo and M.N.M. Abara,
Minimum rank and failed zero forcing number of graphs (2022).
arXiv:2202.04993

\bibitem{2016ajps}
T.  Ansill, B. Jacob, J. Penzellna and D. Saavedra,
Failed skew zero forcing on a graph,
Linear Algebra Appl. 509 (2016) 40--63.

\bibitem{2020bccknt}
M. Beaudouin-Lafon, M. Crawford, S. Chen, N. Karst, L. Nielsen and D.S. Troxell,
On the zero blocking number of rectangular, cylindrical, and M\"obius grids,
Discrete Appl. Math. 282 (2020) 35--47.

\bibitem{2007bg}
D. Burgarth and V. Giovannetti,
Full control by locally induced relaxation,
Phys. Rev. Lett. 99 (2007), 100501.

\bibitem{2024c}
Y.-C. Chou,
Bounds on the Zero Blocking Number of Graphs,
Master thesis, Dept. Applied Math., National Yang Ming Chiao Tung Univ., 2024.

\bibitem{2015fjs}
K. Fetcie, B. Jacob and D. Saavedra,
The failed zero forcing number of a graph,
Involve 8 (2015) 99--117.

\bibitem{2021grtn}
L. Gomez, K. Rubi, J. Terrazas, R. Florez and D.A. Narayan,
All graphs with a failed zero forcing number of two,
Symmetry 13 (2021), 2221.

\bibitem{2024grtn}
L. Gomez, K. Rubi, J. Terrazas and D.A. Narayan,
Failed zero forcing numbers of trees and circulant graphs,
Theory Appl. Graphs 11 (2024), 5.

\bibitem{2020ksv}
N. Karst, X. Shen and M. Vu,
Blocking zero forcing processes in Cartesian products of graphs,
Discrete Appl. Math. 285 (2020) 380--396.

\bibitem{2023ktn}
C. Kaudan, R. Taylor and D.A. Narayan,
An inverse approach for finding graphs with a failed zero forcing number of $k$,
Mathematics 11 (2023), 4068.

\bibitem{2025LLc}
H.-Y. Lin, W.-H. Lin and G.J. Chang,
The zero blocking numbers of generalized Kneser graphs and generalized Johnson graphs,
submitted.

\bibitem{2017s}
Y. Shitov,
On the complexity of failed zero forcing,
Theor. Comput. Sci. 660 (2017) 102--104.

\bibitem{2023su}
N. Swanson and E. Ufferman,
A lower bound on the failed zero-forcing number of a graph,
Involve 16 (2023) 493--504.

\end{thebibliography}
\end{document}